\theoremstyle{definition}
\newtheorem{theorem}{Theorem}[section]
\newtheorem{definition}{Definition}[section]
\newtheorem{corollary}{Corollary}[section]
\newtheorem{proposition}{Proposition}[section]
\newtheorem{lemma}{Lemma}[section]
\begin{document}
\title{Accuracy of Maximum Likelihood Parameter Estimators for Heston volatility SDE}
\author{Robert Azencott\\ Professor of Mathematics \\ University of Houston, Houston, TX 77204\\ 
Emeritus Professor, Ecole Normale Superieure, France \\
Email: razencot@math.uh.edu \\
\and \\ Yutheeka Gadhyan \\ University of Houston, Houston, TX 77204\\
Email: yutheeka@gmail.com}
\date{}
\maketitle
\begin{abstract}
We study approximate maximum likelihood estimators (MLEs) for the parameters of the widely used Heston stock and volatility stochastic differential equations (SDEs). We compute explicit closed form estimators maximizing the discretized log-likelihood of $N$ observations recorded at times $T,2T, \ldots, NT$. We study the asymptotic bias of these parameter estimators first for $T$ fixed and $N \to \infty$, as well as when the global observation time $S= NT \to \infty$ and $T = S/N \to 0$. We identify two explicit key functions of the parameters which control the type of asymptotic distribution of these estimators, and we analyze the dichotomy between asymptotic normality and attraction by stable like distributions with heavy tails. \\ 
We present two examples of model fitting for Heston SDEs, one for daily data and one for intraday data, with moderate values of $N$.\\
\textit{Keywords :}
Joint Stock and Volatility Models, Stochastic Differential Equations, Parameter Estimation, Maximum Likelihood Estimators, Heston joint SDEs, Asymptotic consistency
\end{abstract}
\newpage
\section{Stochastic dynamics for prices and volatilities }
Stochastic differential equations (SDEs) have often been used to model the joint dynamics of assets price and volatility. Autonomous stochastic volatility models have for instance been studied in \cite{ghysels14renault, barndorff2002some, melino1990pricing, melino1991pricing}. We focus here on the widely used Heston model \cite{heston1993closed}, where asset price and squared volatility are jointly driven by a pair of SDEs with non constant coefficients depending on five parameters. This model is known to be amenable to option price computation \cite{heston1993closed, carr1999option}. \\
Volatilities are usually not directly observable and are then indirectly evaluated from available price data and/or option prices (see \cite{chernov1999estimation, broto2004estimation, ait2007maximum} and references therein). Commonly used volatility estimators are derived either from ``realized volatility" directly computed from stock price data sequences or from the Black-Scholes ``implied volatility" computed by ``inverting" observed option prices. 
\paragraph{}
Here we do not attempt to take account of the actual technique used to generate volatility data, since we will revisit this key question in a companion paper (see \cite{azencott1option}). So we deliberately assume that the ``true" values of asset price and volatility are actually available at $N$ times $T, 2T, \ldots, NT$. The sub-sampling time $T$ is either known or user selected, as is often the case for SDEs model fitting to daily data. We derive explicit expressions for discretized maximum likelihood estimators of the five parameters of the Heston joint SDEs, enabling efficient and very fast numerical parameter estimation.\\ 
We first compute the deterministic asymptotic bias of these parameter estimators for $T$ fixed and $N \to \infty$ and show that these asymptotic biases tend to $0$ when $T \to 0$. 
We prove also the asymptotic consistency of these estimators when the global observation time $S = NT \to \infty$ with sub-sampling time $T = S/N$ tending to 0. \\
We show that for these parameter estimators, the errors of estimation have distributions which remain invariant under arbitrary volatility rescaling and any linear time change. This reduces the numerical study of estimation error distributions for volatility parameters to simulations of a special one-parameter family of Heston volatility SDEs.\\
We identify two explicit key functions of the parameters which control the type of asymptotic distribution of parameter estimators, and we analyze the dichotomy between asymptotic normality and attraction by stable like distributions with heavy tails. 
We illustrate our results by fitting Heston SDEs to 252 joint daily observations of the S\&P 500 and VIX indices, as well as to 5,000 minute by minute intra-day data for the Credit Agricole stock price and its approximate volatility. For these empirical Heston models we implement a numerical study of the accuracy of our parameters estimators for moderate $N$, since realistic values of $N$ typically range from 250 to 750 for daily stock data, and from 2,500 to 25,000 for intra-day liquid stocks. \\
In a companion paper, we study the option pricing sensitivity to parametric estimation errors generated by Heston SDEs fitting to price and volatility data (see \cite{azencott1option}).\\
\section{The Heston joint SDEs}\label{Model}
Let $X_t$ be the asset price at time $t \geq \; 0 $. The volatility $\sqrt{Y_t}$ of $X_t$ is the instantaneous relative volatility of the returns process $dX_t$, classically defined by 
$\; Y_t \; dt \; = \; var (dX_t/X_t)$.\\
In the Heston model \cite{heston1993closed}, $\{X_t,Y_t\}$ is a progressively measurable stochastic process defined on a probability space $(\Omega, \mathcal{F}_t, P)$, endowed with an increasing filtration $\mathcal{F}_t$, and is driven by the following joint SDEs,
\begin{eqnarray}
dX_t &=& \mu X_t dt + \sqrt{Y_t} X_t d Z_t, \label{stock}\\
dY_t &=& \kappa(\theta - Y_t)dt + \gamma\sqrt{Y_t}d B_t. \label{vol} 
\end{eqnarray}
Here $Z_t$ and $B_t$ are standard Brownian motions, adapted to the filtration $\mathcal{F}_t$, and have constant instantaneous correlation $\rho $, so that $E[dZ_t dB_t] = \rho dt$. The parameters $\kappa,\theta, \gamma$ are positive and $\mu$ is the constant mean return rate of the asset price. \\
The autonomous volatility SDE \eqref{vol} driving $Y_t$ is parametrized by the positive numbers $\; \kappa,\theta,\gamma \;$. This process is identical to Feller's ``mean reverting" process \cite{feller1951two} originally used in \cite{cox1985theory} to model short-term interest rates. The classical constraint $\; 2\kappa\theta > \gamma^2 $ introduced in \cite{feller1951two} ensures that any solution $Y_t$ of the SDE \eqref{vol} starting at $Y_0 =y > 0$ remains almost surely positive for all $t$. The volatility process then has then the``mean reversion" property $\; \lim_{t \to \infty} E(Y_t) = \theta$.
\section{Parameter Estimation for Heston joint SDEs}\label{ParEst}
\subsection{Parameter estimation outline}\label{Eststrat}
Well known methods are available to estimate volatility from market data ( see for instance \cite{genon1994estimation, dohnal1987estimating, dacunha1986estimation} ). But here we will first consider the ideal situation where ``true" price and volatility data are available. We are given $N$ joint observations of the asset price $X_{nT}$ and its squared volatility $Y_{nT}$, with $1\leq n \leq N$, where $T> 0$ is a fixed \textit{ sub-sampling time}. \\
In practice $T$ is user selected or determined by the data context. So we study the situation where $T$ is known and fixed. The unknown parameters $ [ \; \kappa,\theta,\gamma, \rho \; ]$ are constrained to belong to the domain $Dom \subset \mathbb{R}^4$ defined by 
\begin{equation} \label{ParDom}
\kappa > 0, \quad \theta > 0 , \quad \gamma > 0 , \quad 2\kappa\theta -\gamma^2 >0 , \quad |\rho| <1.
\end{equation} 
There is no constraint on the asset price drift $\mu$, which has no actual impact on option pricing equations and is often ignored in data modelling.\\
The joint log-likelihood of Heston SDEs is not easily tractable numerically, so we deliberately decouple the estimations of $ (\kappa,\theta,\gamma)$ and of $(\mu,\rho)$. Our estimators $(\hat \kappa,\hat \theta,\hat \gamma)$ are thus derived only from the sub-sampled volatility data, by maximizing a time discretization of the log-likelihood, a natural approach for small sub-sampling time $T$. \\
We then discretize the two Heston SDEs to estimate the drift $\mu$ and the discrete Brownian increments $\Delta Z_{nT}$ and $\Delta B_{nT} $, before computing their empirical correlation coefficient $\hat \rho$.
\subsection{Change of scale in the volatility SDE} \label{scale}
Practical estimates of the volatility are typically annualized, for instance when they are based on filtered variance of the rate of returns for daily data. Annualization replaces $Y_t$ by the rescaled value $ R_t = A Y_t$ where the scaling factor $A$ is known. 
The pair $(X_t, R_t)$ is then driven by the SDEs 
\begin{equation*}
dX_t = \mu X_t dt + \frac{1}{\sqrt{A}} \sqrt{R_t} X_t d Z_t \quad \quad \text{and} \quad \quad
dR_t = \bar{\kappa}\;( \bar{\theta} - R_t) dt + \bar{\gamma} \sqrt{R_t}d B_t \quad \quad 
\end{equation*} 
where the new parameters of the autonomous SDE verified by $R_t$ are given by
\begin{equation} \label{barchange}
\bar{\kappa} = \kappa \quad \bar{\theta} = A \theta \quad \bar{\gamma} = \sqrt{A}\gamma 
\end{equation}
For most assets, annualized volatilities $\sqrt{R_t}$ are inferior to $100 \%$ in stable markets. Since $ \bar{\theta} = A \theta$ is the long run limit of $E(R_t)$, one then usually has $ A \theta <1 $ with a rescaling factor $A > 1$. Thus in concrete data modelling the mean reversion parameter $\theta$ will typically be within $[0,1]$. 
\subsection{Change of time in the volatility SDE} \label{time}
Let $(X_t,Y_t)$ be driven by the joint Heston SDEs \eqref{stock} \eqref{vol}.
Fix a rescaling factor $\sigma > 0$ and consider the \textit{time change} $\; t \rightarrow s = t/\sigma \;$, which defines new processes $U_s = X_{s \; \sigma}$ and $V_s = Y_{s \; \sigma}$ for all $s > 0$. Then there are two new standard Brownian motions, which we still denote $Z_s, B_s$, with instantaneous correlation $\rho$, such that $(U_s, V_s)$ are driven by the joint SDEs
\begin{equation}\label{changestock}
dU_s = \sigma \mu U_s ds + \sqrt{\sigma} \sqrt{V_s}U_s dZ_s
\end{equation}
\begin{equation}\label{paruvw}
dV_s = (u - v V_s)ds + \sqrt{2w} \sqrt{V_s} dB_s
\end{equation}
where the new parameters $(u,v,w)$ are linked to $ (\kappa, \theta, \gamma^2) $ by 
\begin{equation} \label{uvw2ktg}
u = \sigma \kappa \theta \; , \; v = \sigma \kappa \; , \; w = \sigma \gamma^2 / 2 \quad \text{and} \quad 
\kappa = v/\sigma \; , \; \theta = u/v \; , \; \gamma^2 = 2w/\sigma \quad 
\end{equation}
The parameters constraints \eqref{ParDom} are equivalent to forcing 
$(u, v, w)$ to belong to the {\em convex cone} $\mathcal{S} $ defined by
\begin{equation}\label{dom}
\mathcal{S} = \{(u, v,w) \in \mathbb{R}^3 \; | \; u > w > 0 \;\; \text{and } \;\; v> 0 \}
\end{equation}
To check these elementary statements, simply note that the change of time 
$s = t/\sigma$ replaces the original Brownian motions $Z_t , B_t$ by the new Brownian motions $\; \tilde{Z}_s = Z_{s \; \sigma} / \sqrt{\sigma} \;$ and $\; \tilde{B}_s = B_{s \; \sigma} / \sqrt{\sigma} \;$.
\section{Estimators of volatility parameters}\label{MLEest}
Given a known sub-sampling time $T$ and $N$ observations $Y_{n T}$ of the squared volatility, we perform the preceding change of time $V_s = Y_{s T}$ with rescaling factor $\sigma = T$, so that $V_s$ is driven by SDE 
\eqref{paruvw}. We now seek to estimate the new parameters $u, v, w$ given the $N$ data $V_n = Y_{n T} $, under the constraints \eqref{dom}.
\paragraph{Discretization of volatility dynamics :}
Euler discretization replaces the SDE \eqref{paruvw} by the recurrence relation
\begin{equation}\label{Euler}
V_{n+1} \approx V_n + u - v V_n + \sqrt{2w}\sqrt{V_n} \Delta B_n \quad \text{where} \;\; 
\Delta B_n = B_{n+1} - B_n \quad 
\end{equation}
By \cite{higham2005convergence}, when $T \to 0$ and $N \to \infty$ with $NT$ \textit{bounded}, this Euler discretization provides an asymptotically consistent approximation of the continuous process $\{ \; Y_{nT} \; \}$ . 
\paragraph{Discretized log-likelihood : } 
Due to \eqref{Euler}, the variables $Q_n$ defined by 
\[
Q_n = ( \Delta V_n - u +v V_n ) / \sqrt{V_n} \; \approx \; \sqrt{2w} \Delta B_n \quad \text{where} \;\; 
\Delta V_n = V_{n+1} - V_n \quad \quad 
\]
are approximately independent and Gaussian, with mean $0$ and variance $2w \;$.\\ 
The discretized log-likelihood $L_N$ of $V_1,\ldots,V_N$ is given by, up to a constant,
\[
\frac{2}{N} L_N \; = \; -\log 2\pi - \log(2w) - \frac{S_N}{2w} \quad \text{where} \quad 
S_N = \frac{1}{N}\sum_{n=0}^{N-1} Q^{2}_n \quad \quad 
\]
As easily seen, $S_N$ is a positive quadratic function of $u,v,w$ given by
\[
S_N = a + bu +cv +\frac{1}{2}du^2 -2uv + \frac{1}{2}fv^2
\]
%
where $\; \mathcal{A}_N = ( a, b, c, d, f )= ( a_N , b_N , c_N , d_N , f_N ) \;$ is the vector of sufficient statistics given by 
\begin{eqnarray}\label{coeff1}
a_N = \frac{1}{N} \sum_{n=0}^{N-1} \frac{(V_{n+1} -V_n)^2}{V_n} \; ; \quad 
b_N = - \frac{2}{N} \sum_{n=0}^{N-1} \frac{V_{n+1} -V_n}{V_n} \\
c_N = \frac{2}{N} (V_N - V_0) \; ; \quad 
d_N = \frac{2}{N} \sum_{n=0}^{N-1} \frac{1}{V_n} \; ;\quad 
f_N = \frac{2}{N}\sum_{n=0}^{N-1} V_n
\end{eqnarray} 
In these formulas, we always set $V_n= Y_{nT}$. We then almost surely have
\begin{equation} \label{discriminant} 
a>0 \;, \;\; d > 0 \;, \;\; f>0 \;,\quad df - 4 >0 \;, \quad 2af - c^2 > 0 \;, \quad d+f -4 > 0
\end{equation} 
Indeed, Cauchy-Schwarz inequality easily implies the positivity of $2af - c^2$ and $df-4$. One then has 
$\; (d+f)^2 \geq 4df > 16 \;$ and hence $\; d+f > 4 \;$.
\paragraph{Log-likelihood maximization : }
To compute an approximate maximum likelihood estimator $\hat h$ of $p = (u,v,w) $, we minimize over all points $p$ in the cone $\mathcal{S}$ the function 
\begin{equation*}
L(p) = L(u,v,w) = \log (2w) + \frac{1 }{2w} \; [ \; a + bu +cv +\frac{1}{2}du^2 -2uv + \frac{1}{2}fv^2 \; ] \quad \quad
\end{equation*}
We have $L(p) \to + \infty$ when $p$ tends to $0$ or $\infty$ in $\mathcal{S}$, so the minimum of $L$ on 
$ \mathcal{S} $ is actually reached on the closure $\overline{\mathcal{S}} $ of $ \mathcal{S}$. Since $L$ is strictly convex in $(u,v)$ for fixed $w$, and in $w$ for fixed $\; u,v \;$, the minimum of $L$ on the closed convex cone $\overline{\mathcal{S}} $ is reached at a unique point $p^*$.\\
Due to the separate convexity of $L$, its gradient $\nabla L$ will be $0$ at some $p$ in $ \mathcal{S} $ iff $p = p^*$ and $p^* \in \mathcal{S} $. In this "generic" situation, the three equations 
$\partial _u L(p) = \partial _v L(p) = \partial _w L(p) =0$ have a unique solution given by 
\begin{equation}\label{case 1}
u^* = - \frac{bf+2c}{df-4}, \quad v^* = - \frac{2b+cd}{df-4},\quad
w ^*= \frac{a}{2} - \frac{b^2 f+4bc+c^2 d}{4(df-4)}
\end{equation}
Since $df-4 >0$, this solution verifies the constraints \eqref{dom} iff the vector 
$\; \mathcal{A}_N = ( a , b, c, d, f ) \; $ of sufficient statistics verifies the \textit{ genericity} conditions
\begin{equation}\label{genericity}
2b+cd \; < \; 0 \quad \quad 0 \; < \; 2 a(df-4) - b^2 f -4 b c - c^2 d < -4 ( b f + 2 c)
\end{equation}
We shall prove below that for small enough fixed sub-sampling time $T$ , the vector $\mathcal{A}_N $ verifies \eqref{genericity} with a probability tending to 1 as $N \to \infty $. So we consider two cases.\\ 
\textit{Generic Case : } When \eqref{genericity} is true, we estimate the parameter vector $p \in \mathcal{S}$ by $\hat p = p^* = ( u^* , v^* , w^* ) \in \mathcal{S}$ as explicitly given by \eqref{case 1}.\\ \\
\textit{Boundary Case : } When \eqref{genericity} is not true, one can still compute the unique minimizer $p^*$ of $L$ on $\overline{\mathcal{S}} $ by explicit formulas, but then $p^*$ lies on the boundary $\partial \mathcal{S}$, so that we define $\hat p $ as any interior point in $\mathcal{S}$ close enough to $p^*$. As just mentioned, for small enough fixed $T$, the probability of observing a ``boundary case " tends to $0$ when $N \to \infty $. 
\paragraph{Return to the original volatility parameters :}
The change of parameters formula \eqref{uvw2ktg} with $\sigma =T$ naturally suggests to define estimators $( \hat \kappa_N, \hat \theta_N, \hat \gamma_N^2)$ of the original volatility parameters $(\kappa, \theta, \gamma^2)$ by 
\begin{equation}\label{hatuvw2ktg}
\hat \kappa_N = \hat {v}/T,\quad \hat \theta_N = \hat{u}/\hat{v}, \quad 
\hat \gamma_N^2 = 2\hat{w}/T.
\end{equation}
In the generic case where \eqref{genericity} holds, these three estimators are rational functions of the vector $\mathcal{A}_N$ given by, in view of \eqref{case 1} and \eqref{hatuvw2ktg}, 
\begin{equation}\label{hatestim}
\hat \kappa_N = -\frac{2b_N + c_N d_N}{T(d_N f_N - 4)} \;\; ,\quad 
\hat \theta_N = \frac{b_N f_N + 2 c_N}{2b_N + c_N d_N} \;\; , \quad 
\hat \gamma^2_N = \frac{a_N}{T} 
- \frac{b_N^2 f_N + 4b_N c_N + c_N^2 d_N }{2T(d_N f_N - 4)} \;\; 
\end{equation}
The genericity conditions \eqref{genericity} are then clearly equivalent to 
\begin{equation} \label{generic2}
\hat\kappa_N > 0 \quad \quad \text{and} \quad \quad 0 \; < \; \hat\gamma_N^2 \; < \; 2 \hat\kappa_N \hat\theta_N
\end{equation}
which obviously imply $\hat\theta_N > 0$.
\paragraph{Estimators of the drift $\mu$ and the correlation $\rho$ : } \label{driftcorr.estim}
Recall that the processes $U_s = X_{sT}$ and $ V_s = Y_{sT}$ verify the SDEs \eqref{changestock} and \eqref{paruvw} with $\sigma = T \;$ , driven by Brownian motions 
$Z_t$ and $B_t$ with instantaneous correlation $\rho$, and parameters $u,v,w$. To estimate the drift $\mu$ , we discretize SDE \eqref{changestock} to write 
\begin{equation*} 
\Delta Z_n \; \approx \; \frac{\Delta U_n - T\mu U_n }{\sqrt{T} U_n\sqrt{V_n}} \quad \text{where} \quad \Delta Z_n = Z_{n+1} - Z_n \quad \text{and} \quad \Delta U_n = U_{n+1} - U_n \quad 
\end{equation*}
To maximize the log-likelihood, we minimize in $\mu$ the sum $\; \sum_{0 \leq n \leq N-1}(\Delta Z_n )^2 \;$, which provides the estimator $\hat {\mu}_N$ 
\begin{equation}\label{hatmu}
\hat{\mu}_N = \frac{1}{T \sum_{n=0}^{N-1} 1/V_n}\sum_{n=0}^{N-1}\frac{1}{V_n}\frac{\Delta U_n }{U_n}
\end{equation}
We then estimate the Brownian increments $\Delta Z_n $ and $\Delta B_n = B_{n+1} - B_n$ by 
\begin{equation*}
\Delta \hat{Z}_n \approx \dfrac{\Delta U_n - T\hat \mu U_n}{\sqrt{T}\sqrt{V_n}U_n}, \quad \quad \Delta \hat{B}_n \approx \dfrac{\Delta V_n - (\hat u - \hat vV_n)}{\sqrt{2\hat w} \sqrt{V_n}}
\end{equation*}
The empirical correlation $\hat \rho_N$ between $\Delta \hat{Z}_n $ and $\Delta \hat{B}_n$, will then be a natural estimator of $\rho$ 
\section{Distribution invariance for parameter estimators} \label{distrib.invariance}
\subsection{ Canonical Parametrization of Volatility SDE }
\begin{definition}{Canonical volatility parameters : } \label{canonic.param}
Let $Y_t$ be the squared volatility process driven by the Heston SDE \eqref{vol} parametrized by $\kappa, \theta, \gamma^2 $ verifying the constraints \eqref{ParDom}. We will show below that for $N \to \infty$ and fixed sub-sampling time $T$, the asymptotic behaviour of the parameter estimators $\hat\kappa_N, \hat\theta_N, \hat\gamma^2_N$ is controlled by the following two \textit{canonical parameters }
\begin{equation} \label{intrinsic}
\omega = e^{-\kappa T} \quad \text{and} \quad \zeta = \frac{\kappa \theta}{\gamma^2} >1/2
\end{equation}

\end{definition}
In view of paragraphs \ref{scale} and \ref{time}, the canonical parameters $0 < \omega <1 $ and $\zeta>1/2 $ remain unchanged under linear rescaling $ R_t =A Y_t$ or linear time change 
$ Z_s= Y_{s \; \sigma}$.
\begin{definition}{Canonical estimation problem : } \label{defcanonic}
For each $\zeta >1/2$ we define the following \textit{canonical} volatility SDE 
\begin{equation} \label{canonical}
dJ_t = ( \zeta - J_t ) dt + \sqrt{J_t} d W_t 
\end{equation}
where $W_t$ is a \textit{standard} Brownian motion. We call \textit{canonical estimation problem} with canonical parameters $\zeta >1/2$ and $ 0<\omega<1$ the ( virtual ) situation where \\
- the volatility $J_t$ is driven by the canonical SDE \eqref{canonical}, which is of the form \eqref{vol} with ``unknown" parameters $\; \bar\kappa=1, \; \bar\theta = \zeta, \; \bar \gamma= 1$,\\
- the subsampling time is $ \bar {T} = - \log(\omega)$,\\
- the observed data are the $V_n= J_{n \bar{T}} $. \\
\end{definition}
\begin{definition}{Normalized Volatility Parameter Estimators : } \label{defrelacc}
Let $Y_t$ be driven by the volatility SDE \eqref{vol}, parametrized by 
$\; \Theta = ( \kappa, \theta, \gamma^2 ) \; $ verifying \eqref{ParDom}. Given the sub-sampling time $T$ and $N$ successive squared volatility observations $ Y_{nT}$, let 
$\; \hat\Theta_N = ( \hat\kappa_N, \hat\theta_N, \hat\gamma_N^2 ) \;$ be the volatility parameter estimators given by \eqref{hatestim}. \\
Define the vector $D_N$ of \textit{normalized parameter estimators } by
\begin{equation} \label{relaccuracy}
D_N = [ \; \frac{\hat\kappa_N}{\kappa} , \; \frac{\hat\theta_N}{\theta} , \; \frac{\hat\gamma_N^2}{\gamma^2} \; ] 
\end{equation}
\end{definition}
\begin{proposition} \label{invariance}
Hypotheses and notations are those of definition \ref{defrelacc}. Let $D_N$ be the vector of normalized estimators for the three volatility parameters. Then for any $N$ and $T$, the distribution $DIS_N$ of $D_N$ depends only on $N$ and on the canonical parameters $\zeta > 1/2 $ and $ 0 <\omega <1$. Moreover, $DIS_N$ is identical to the distribution of normalized parameter estimators in the canonical estimation problem just defined by \ref{defcanonic}, where the three estimators of $\; \bar\kappa = 1,\; \bar\theta = \zeta,\; \bar \gamma =1 \;$ are computed by formulas \eqref{hatestim} and \eqref{coeff1} with sub-sampling time $\bar{T} = \kappa T$ and volatility data $V_n = J_{n \kappa T}$.
\end{proposition}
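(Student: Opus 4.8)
The plan is to exhibit an explicit deterministic transformation mapping the original volatility process $Y_t$ onto the canonical process $J_t$ of \eqref{canonical}, and to check that it carries the normalized estimator vector $D_N$ of \eqref{relaccuracy} onto the analogous canonical vector \emph{pointwise}, i.e. for each Brownian realization. Concretely, I would combine the time change of subsection \ref{time} with rescaling factor $\sigma = 1/\kappa$ and the volatility rescaling of subsection \ref{scale} with factor $A = \kappa/\gamma^2$, setting $J_s := A\,Y_{s/\kappa}$. Using \eqref{uvw2ktg} and \eqref{barchange} (or a direct It\^o computation), one verifies that $J_s$ is driven by the \emph{standard} Brownian motion $W_s = \sqrt{\kappa}\,B_{s/\kappa}$ and satisfies exactly \eqref{canonical} with $\bar\kappa = 1$, $\bar\theta = \zeta = \kappa\theta/\gamma^2$, $\bar\gamma = 1$. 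Evaluating at the canonical sampling times $s = n\bar T$ with $\bar T = \kappa T$ yields the crucial identity $J_{n\bar T} = A\,Y_{nT}$, so the canonical data sequence is merely the original data sequence multiplied by the deterministic constant $A$.

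Next I would track the effect of the data rescaling $V_n \mapsto A V_n$ on the sufficient statistics \eqref{coeff1}. Since these statistics do not involve $T$ and each is homogeneous in the data, a one-line inspection gives the scaling degrees
\begin{equation*}
(a,b,c,d,f)\ \longmapsto\ (A a,\ b,\ A c,\ d/A,\ A f).
\end{equation*}
Substituting into the estimator formulas \eqref{hatestim}, now evaluated with the canonical sampling time $\bar T = \kappa T$, the factors of $A$ cancel in the combinations $2b + cd$ and $df - 4$ and rescale cleanly elsewhere, so that
\begin{equation*}
\hat{\bar\kappa}_N = \hat\kappa_N/\kappa,\qquad \hat{\bar\theta}_N = A\,\hat\theta_N,\qquad \hat{\bar\gamma}^2_N = \hat\gamma_N^2/\gamma^2 .
\end{equation*}
Dividing by the canonical true values $\bar\kappa = 1$, $\bar\theta = \zeta = A\theta$, $\bar\gamma^2 = 1$ and recalling $A = \kappa/\gamma^2$, every factor of $A$ and $\kappa$ cancels, giving the pointwise identity of the canonical normalized estimators with $D_N = [\hat\kappa_N/\kappa,\ \hat\theta_N/\theta,\ \hat\gamma_N^2/\gamma^2]$. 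Applying the same substitution to \eqref{generic2} shows the genericity region is preserved, so both constructions use the same branch of the estimator.

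Finally I would pass to the distributional level. Because the $J$ constructed above is driven by a standard Brownian motion and solves \eqref{canonical}, it has exactly the law prescribed by the canonical estimation problem of Definition \ref{defcanonic} with parameters $\zeta$ and $\bar T = \kappa T = -\log\omega$. The canonical normalized estimator is, by definition, a fixed measurable functional of the sampled path $\{J_{n\bar T}\}$, and the previous step shows that this functional evaluated on our $J$ equals $D_N$ identically. Hence $D_N$ and the canonical normalized estimator share the same distribution, and since the latter depends only on $N$, on $\zeta$, and on $\omega$ (through $\bar T$), so does $DIS_N$.

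The only genuinely delicate points, as opposed to bookkeeping, are first the correct time scaling of the driving noise, $W_s = \sqrt{\kappa}\,B_{s/\kappa}$, which is what guarantees that $J$ has \emph{exactly} the canonical law rather than merely a law in the same family; and second the clean verification that the explicit $T$-dependence in \eqref{hatestim} recombines with the homogeneity degrees of the five statistics so that normalization by $\kappa,\theta,\gamma^2$ removes every residual occurrence of the original parameters. Everything else reduces to the homogeneity computation of the statistics in \eqref{coeff1}.
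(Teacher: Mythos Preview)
Your proposal is correct and follows essentially the same route as the paper: apply the combined space--time rescaling $J_s = A\,Y_{s/\kappa}$ with $A=\kappa/\gamma^2$, verify the homogeneity degrees $(a,b,c,d,f)\mapsto(Aa,\,b,\,Ac,\,d/A,\,Af)$ of the sufficient statistics, and deduce the pointwise identity $\tilde D_N\equiv D_N$ of normalized estimators. The only cosmetic difference is that the paper carries out the computation for general $(A,\sigma)$ before specializing to $(\kappa/\gamma^2,\,1/\kappa)$, whereas you fix these values at the outset; your explicit remark that the genericity condition \eqref{generic2} is preserved under the rescaling is a useful addition that the paper leaves implicit.
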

\begin{proof}
As seen in sections \ref{scale} and \ref{time}, for any fixed positive $A$ and $\sigma$, the process $\tilde{Y}_s = AY_{s\sigma} $, obtained by linear space rescaling and change of time, verifies a volatility SDE of the same type as \eqref{vol}, but parametrized by 
\begin{equation} \label{23}
\tilde{\kappa} = \sigma \kappa \;\;; \quad \tilde{\theta} = A \theta \;\;; 
\quad \tilde{\gamma}^2 = A \sigma \gamma^2.
\end{equation}
The process $ \tilde{Y}_s $ will now be sub-sampled at time intervals $\tilde{T}= T / \sigma$, to provide the $N$ observations $\tilde{Y}_{n \tilde{T}} = AY_{n T / \sigma}$. We can then compute the five sufficient statistics $\tilde{a}_N, \; \tilde{b}_N, \; \tilde{c}_N, \; \tilde{d_N},\; \tilde{f}_N $ by formulas \eqref{coeff1}. Formula \eqref{hatestim} where $T$ is replaced by $\tilde{T}= T / \sigma \;$, provides the estimators $\; \tilde{\Theta}_N = ( \tilde{\kappa}_N , \tilde{\theta}_N ,\tilde{\gamma}_N^2 ) \;$ of 
$\; \tilde{\Theta} = ( \ \tilde{\kappa}, \tilde{\theta}, \tilde{\gamma^2} )$. The obvious relations
\[
\tilde{a}_N = A a_N , \quad \tilde{b}_N = b_N , \quad \tilde{c}_N = A c_N , \quad \tilde{d}_N = \frac{1}{A} \, d_N , \quad \tilde{f}_N = A f_N 
\]
then imply by \eqref{hatestim} 
\begin{equation} \label{24}
\tilde{\kappa}_N = \sigma \kappa_N; \quad \tilde{\theta}_N = A \theta_N; 
\quad \tilde{\gamma}_N^2 = A \sigma \gamma_N^2.
\end{equation}
The two vectors $D_N$ and $\tilde{D}_N$ of normalized parameter estimators are defined by 
\begin{equation*} \label{240}
D_N = [ \; \frac{\hat\kappa_N}{\kappa} , \; \frac{\hat\theta_N}{\theta} , \; \frac{\hat\gamma_N^2}{\gamma^2} \; ] ; \quad 
\tilde{D}_N = [ \; \frac{\tilde{\kappa}_N}{\tilde{\kappa}} , \; 
\frac{\tilde{\theta}_N}{\tilde{\theta}} , \; 
\frac{\tilde{\gamma}_N^2}{\tilde{\gamma}^2} \; ] \quad 
\end{equation*}
The relations \eqref{23} and \eqref{24} directly imply the \textit{identity } $\tilde{D}_N \equiv D_N$. We have just proved that for any finite $N$ and any $T$ the normalized parameter estimators have the same distributions after arbitrary linear space rescaling and change of time.
In particular let us select $A = \kappa / \gamma^2$ and $\sigma = 1/\kappa$.
We then have 
\[
\tilde{\kappa} = 1 \;\;; \quad \tilde{\theta} = \kappa \theta/\gamma^2 = \zeta > 1/2 \;\;; 
\quad \tilde{\gamma}^2 = 1
\]
so that the process $J_t = \tilde{Y}_t = AY_{t\sigma} = (\kappa / \gamma^2) Y_{t \kappa}$ verifies the \textit{canonical} SDE \eqref{canonical}. This concludes the proof.
\end{proof}
\section{Markov chain of sub-sampled volatilities}
\subsection{Transition density for the volatility process} \label{density}
\textbf{Convention :} We derive below several upper bound inequalities involving various numerical positive constants determined by the volatility parameters and by moment exponents $ p,q > 1 $. To simplify notations, all these positive constants will be denoted by the same letter $C $. \\
The volatility process $Y_t$ is driven by SDE \eqref{vol} with parameters $\kappa, \theta, \gamma^2$ and $T$ is the sub-sampling time. The associated canonical parameters $\zeta >1/2$ and $0 < \omega <1 $ have been defined in \eqref{intrinsic}.\\
As shown in \cite{cox1985theory}, for any $s>0$ and any $t>0$ the conditional density $ g_{t}(y,z) $ of $Y_{s+t}=z$ given $Y_s=y $ is given by
\begin{equation} \label{cox}
g_t(y,z)= \frac{\lambda}{2} \exp( \; - \frac{\lambda}{2} (z + \nu) \;) \; (\frac{z}{\nu})^{r/2} \; I_{r}( \lambda \sqrt{z \nu } )
\end{equation}
where we have set 
\begin{equation} \label{intermediary.notations}
\lambda = \frac{4\kappa}{\gamma^2 (1 - \omega)} \; ; \quad 
r = \frac{2\kappa\theta}{\gamma^2}-1 = 2 \zeta -1 > 0 \;; \quad \nu = e^{-t \kappa}. 
\end{equation}
Here $I_{r}$ is the modified Bessel function of the first kind of order $r$, which has the well known expression $I_r(x)= x^r J(x) $ with $J(x)$ given by the series 
\begin{equation} \label{bessel}
J(x) = \sum_{k=0}^{\infty} \; \frac{x^{2k}}{k! \; \Gamma(k+ 1 + r)}
\end{equation}
where $\Gamma$ is the classical gamma function. \\
We can thus write
\begin{equation} \label{cox.bessel}
g_t(y,z)= \frac{1}{2} \lambda^{r+1} z^r \exp( - \frac{\lambda}{2} (z + y \nu) ) \; J( \lambda \sqrt{z y \nu } ).
\end{equation}
Note also that the transition density $P(R_{s+t} = z \; | \; R_{s} = y) $ of the \textit{rescaled} volatility process $R_{s} = \lambda Y_{s} $ coincides with the density of a non-central chi-square distribution with $ 2(r+1) = 4 \zeta $ degrees of freedom and non-centrality parameter $y \nu$ where $\nu = e^{- t \kappa}$, given by 
\begin{equation} \label{noncentral}
f(z)= \frac{1}{2} \exp( - \frac{z + y \nu}{2} ) (\frac{z}{y \nu})^{r/2} I_{r}(\sqrt{z y \nu}).
\end{equation}
As proved in \cite{cox1985theory, feller1951two}, on the state space $R^+$, the Markov process $Y_t$ has a unique stationary distribution $\Psi$, with density $\psi(z) ) = \lim_{t \to \infty} g_t(y,z)= $ given by 
\begin{equation} \label{psi}
\psi(z) = \frac{1}{2} \lambda^{r+1} z^r \exp( - \frac{\lambda}{2} z ) \; F(0) \; \; \text{for all} \; z > 0.
\end{equation}
After the linear rescaling $Y_s \rightarrow R_s = \lambda Y_s$, the stationary density $\psi(z)$ becomes the density $\; \frac{1}{2^{1+r}\Gamma(1+r)} z^{r} e^{- z /2} \;$ of a standard chi-square distribution having 
$ 2(r+1) = 4 \zeta$ degrees of freedom. 
The following technical proposition quantifies precisely the speed of convergence of $g_t(y,z)$ to $\psi(z)$ as $t \to \infty$. 
\begin{proposition}\label{g.psi.bound}
For fixed $T$, consider the volatility Markov chain $V_n =Y_{nT}$, and let $\Psi$ be its unique stationary probability.
Let $g_t(y,z)$ be the transition density $P( Y_{s+t}= z \; | \; Y_s= y )$ of the continuous time squared volatility process $Y_t$. We then have the upper bound
\begin{equation} \label{cox++}
g_{t}(y,z) \leq C z^r \; [ \; e^{- \lambda z/4} + 1_{ \left\lbrace z \leq 64 y \nu \right\rbrace } \; e^{\frac{3 \lambda}{2} y \nu} \; ] \;\; \text{with} \; \nu = e^{- t \kappa}
\end{equation}
As $t \to \infty$ the transition density $g_t(y,z)$ converges pointwise and at exponential speed to the stationary density $\psi(z)$ of $\Psi$. More precisely, there is a constant $C$ such that for all positive $y, z, t$ , one has with $\nu = e^{-t \kappa}$,
\begin{equation} \label{g-psi}
|g_t(y,z) - \psi(z)| < C \nu y (1+z) z^r \;\left[ \; e^{- \lambda z / 4} + 
1_{ \left\lbrace z \leq 64 \nu y \right\rbrace } \; e^{2 y \lambda \nu }\right] 
\end{equation}

\end{proposition}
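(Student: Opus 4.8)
The plan is to reduce everything to a single elementary inequality for the Bessel series $J$ of \eqref{bessel}, and then to handle both estimates by completing a square in the exponent. Both the transition density \eqref{cox.bessel} and the stationary density \eqref{psi} carry the common prefactor $\frac12\lambda^{r+1}z^r e^{-\lambda z/2}$, so the whole problem is to control the remaining factor $e^{-\lambda y\nu/2}J(\lambda\sqrt{zy\nu})$ and to compare it with the constant $J(0)=1/\Gamma(1+r)$ appearing in \eqref{psi}. The key lemma I would establish first is the crude but clean bound
\[
J(x)\ \le\ \frac{1}{\Gamma(1+r)}\,e^{2x}\qquad\text{for all }x\ge 0 .
\]
This follows from two term-by-term comparisons: since $r\ge 0$ one has $\Gamma(k+1+r)=\Gamma(1+r)\prod_{i=1}^k(i+r)\ge\Gamma(1+r)\,k!$, whence $J(x)\le\frac1{\Gamma(1+r)}\sum_k x^{2k}/(k!)^2$; and the central binomial bound $\binom{2k}{k}\le 4^k$ gives $\sum_k x^{2k}/(k!)^2\le\sum_k (2x)^{2k}/(2k)!=\cosh(2x)\le e^{2x}$. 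The factor $2$ in this exponent is precisely what produces the constants $3/2$ and $2$ later in \eqref{cox++} and \eqref{g-psi}.

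For the upper bound \eqref{cox++}, I would insert the lemma with $x=\lambda\sqrt{zy\nu}$ into \eqref{cox.bessel}, obtaining $g_t(y,z)\le Cz^r\exp(-\tfrac\lambda2 z-\tfrac\lambda2 y\nu+2\lambda\sqrt{zy\nu})$, and then rewrite the exponent by completing the square as
\[
-\tfrac\lambda2\big(\sqrt z-2\sqrt{y\nu}\big)^2+\tfrac{3\lambda}{2}\,y\nu .
\]
A dichotomy on the threshold $z\lessgtr 64\,y\nu$ then finishes the argument. If $z>64\,y\nu$ then $\sqrt{y\nu}<\tfrac18\sqrt z$, so $(\sqrt z-2\sqrt{y\nu})^2\ge\tfrac{9}{16}z$ while $y\nu<z/64$, and these combine to push the whole exponent below $-\lambda z/4$, producing the first term. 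If instead $z\le 64\,y\nu$, the Gaussian factor is $\le 1$ and only $e^{3\lambda y\nu/2}$ survives, producing the indicator term.

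For the convergence estimate \eqref{g-psi}, I would write, with $m=y\nu$ and $\xi=\lambda\sqrt{zm}$,
\[
g_t(y,z)-\psi(z)=\tfrac12\lambda^{r+1}z^r e^{-\lambda z/2}\big[(e^{-\lambda m/2}-1)J(\xi)+\big(J(\xi)-J(0)\big)\big],
\]
and bound the two brackets separately so as to extract a common factor $m=\nu y$ and the linear-in-$z$ growth. The first bracket uses $|e^{-\lambda m/2}-1|\le\tfrac\lambda2 m$ together with the lemma, giving $\le C m\,e^{2\xi}$. For the second, the power series yields $J(\xi)-J(0)=\lambda^2 zm\sum_{j\ge0}(\lambda^2 zm)^j/\big((j+1)!\,\Gamma(j+2+r)\big)$, and since $(j+1)!\,\Gamma(j+2+r)\ge j!\,\Gamma(j+1+r)$ the remaining sum is dominated by $J(\xi)$, so this bracket is $\le C z m\,e^{2\xi}$. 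Hence the total bracket is $\le C m(1+z)e^{2\lambda\sqrt{zm}}$, and after completing the square $-\tfrac\lambda2 z+2\lambda\sqrt{zm}=-\tfrac\lambda2(\sqrt z-2\sqrt m)^2+2\lambda m$ the same threshold dichotomy on $z\lessgtr 64m$ gives the two terms $e^{-\lambda z/4}$ and $e^{2\lambda y\nu}$. The pointwise exponential convergence is then immediate: for fixed $y,z$, once $\nu=e^{-\kappa t}$ is small enough the indicator vanishes and the right side of \eqref{g-psi} is $\le C\nu y(1+z)z^r e^{-\lambda z/4}\to 0$ at the rate $\nu=e^{-\kappa t}$.

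I would expect the main obstacle to be organizational rather than deep: one must choose the completion of the square and the threshold $64=8^2$ so that in the regime $z>64\,y\nu$ the Gaussian term $\tfrac\lambda2(\sqrt z-2\sqrt{y\nu})^2\ge\tfrac{9}{32}\lambda z$ dominates the surviving positive contribution (at most $\tfrac1{32}\lambda z$) with exactly enough margin to leave $-\lambda z/4$. The single genuinely quantitative input is the Bessel bound with the correct exponential rate; everything downstream is careful bookkeeping of these constants.
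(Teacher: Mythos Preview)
Your proposal is correct and follows essentially the same route as the paper: the Bessel bound $J(x)\le C e^{2x}$, the dichotomy on $z\lessgtr 64\,y\nu$ after completing the square in the exponent, and the split $e^{-\lambda m/2}J(\xi)-J(0)=(e^{-\lambda m/2}-1)J(\xi)+(J(\xi)-J(0))$ with the series bound $J(\xi)-J(0)\le \xi^2 J(\xi)$ are exactly the ingredients the paper uses. The only cosmetic differences are that the paper rescales to $\lambda=1$ before proving \eqref{g-psi} and states the threshold inequality \eqref{elementary} directly rather than deriving it via an explicit completion of the square.
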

\textbf{Proof :} By Stirling formula and since $ \Gamma(k+ 1 + r) \geq k!$, there is a constant $C$ such that for all integers $k \geq 0$, 
\[
\frac{x^{2k}}{k! \; \Gamma(k+ 1 + r)} \leq \frac{x^{2k}}{k! \; k!}\leq
C \frac{(2x)^{2k}}{(2k)!}
\]
The series expansion of $J(x)$ then yields 
\begin{equation} \label{J}
J(x) = \sum_{k=0}^{\infty} \; \frac{x^{2k}}{k! \; \Gamma(k+ 1 + r)} \leq C e^{2 x} \;\; \text{for all} \; x \geq 0
\end{equation}
which by \eqref{cox.bessel} implies the following bound valid for all positive $y, z, t$
\begin{equation} \label{cox+}
g_{t}(y,z) \leq C z^r \exp \; [ \; \lambda ( -z/2 - y \nu /2 + 2 \sqrt{z y \nu} \; ] \;\; \text{with} \; \nu = e^{- t \kappa}
\end{equation}
An elementary argument shows that for all positive $z, y, \nu $, 
\begin{equation} \label{elementary}
-z/2 + 2 \sqrt{ z y \nu } \leq
- \frac{z}{4} 1_{ \left\lbrace z > 64 y \nu \right\rbrace } + 
2 y \nu 1_{ \left\lbrace z \leq 64 y \nu \right\rbrace } 
\end{equation}
Substituting this bound into \eqref{cox+} yields for all positive $z, y, \nu $, the announced upper bound \eqref{cox++} 
\begin{equation} \label{cox+++}
g_{t}(y,z) \leq C z^r \; [ \; e^{- \lambda z/4} 
+ 1_{ \left\lbrace z \leq 64 y \nu \right\rbrace } \; e^{\frac{3}{2}\lambda y \nu} \; ] \;\; \text{where} \; \nu = e^{- t \kappa}
\end{equation}
After the volatility rescaling $R_s= \lambda Y_s$, the densities $g_t(y,z)$ and $\psi(z)$ are replaced by $\frac{1}{\lambda}g_t(y/ \lambda, z/ \lambda)$ and $\frac{1}{\lambda}\psi( z/ \lambda)$. Hence we may and do assume that 
$ \lambda =1 $ to prove \eqref{g-psi}. Equations \eqref{cox.bessel} and \eqref{psi} then become, with $\nu = e^{- t \kappa }$, 
\begin{equation*}
g_t(y,z)= \frac{1}{2} z^r \exp( - \frac{1}{2} (z + y \nu) ) \; J( \sqrt{z y \nu } ) \quad \text{and} \quad \psi(z) = \frac{1}{2} z^r e^{-z/2} J(0)
\end{equation*}
This implies 
\begin{equation} \label{gpsi2}
g_t(y,z)- \psi(z) = 
\frac{1}{2} z^r e^{-z/2} \left[ e^{-y \nu/2} J(z y \nu)-J(0) \right] 
\end{equation}
The series expansion of $J(x)$ yields for all positive $x$
\[
| J(x) - J(0) | = x \sum_{j \geq 0} \frac{x^j}{(j+1)! \; \Gamma(j+2+r)}  \leq x J(x) 
\]
Since $\; | e^{- y \nu/2} -1 | < C y \nu $ for some constant $C$, a standard argument provides then another constant $C$ such that for all positive $\nu, y, z $ 
\begin{equation} \label{eJ-J}
| e^{- y \nu/2} J(z y \nu) - J(0) | < C (1+z) y \nu J(z y \nu) 
\end{equation}
Combining the relations \eqref{gpsi2}, \eqref{eJ-J}, and \eqref{J}, we get another constant $C$ such that for all positive $y, z, t$
\[
|g_t(y,z) - \psi(z)| < C \nu y (1+z) z^r e^{-z/2 + 2 \sqrt{z y \nu}} 
\]
where $ \nu = e^{- t \kappa } $. Applying \eqref{elementary} then yields for all positive $y, z, t$
\[
|g_t(y,z) - \psi(z)| < C \nu y (1+z) z^r \;[\; e^{-z/4}
+ 1_{ \left\lbrace z \leq 64 y \nu \right\rbrace } \; e^{2 y \nu} \;] 
\]
which completes the proof of inequality \eqref{g-psi}.

\subsection{Ergodic theorems for the volatility process}\label{ergodicth}
\paragraph{Notations:} Let $ \mathcal{V}$ be the set of all infinite positive sequences $\{ v_0, v_1, \dots, v_n, \dots \}$ endowed with the sigma-algebra generated by finite products of Borel subsets of $R^+$. \\
For each $y > 0$, and for fixed $T$, the distribution of the infinite random sequence $ V_n = Y_{nT} $ with starting point $V_0 = Y_0 = y$ is a probability $P_y$ on $ \mathcal{V}$, and we let $E_y$ denote expectations with respect to $P_y$. We write $P_y \ a.s.$ for `` $P_y$ almost surely ", and ``almost all $y$" for ``Lebesgue almost all $y$".\\
Under $P_y$, the sequence $V_n = Y_{nT}$ is an homogeneous Markov chain on $R^{+}$, with one-step transition density $ P(V_{n+1}= z \; | \; V_n = y) = g_T(y,z) $ given by formula \eqref{cox} with $t= T$. 
As proved in \cite{feller1951two, cox1985theory}, on the state space $R^+$, the continuous time Markov process $Y_t$ as well as the sub-sampled Markov chain $V_n = Y_{nT}$ both have a unique stationary distribution $\Psi$, with strictly positive and continuous density $\psi$ given by equation \eqref{psi}. When the distribution of $V_0 = Y_0$ is $\Psi$, the Markov chain $V_n$ is \textit{strictly stationary} and irreducible with respect to Lebesgue measure. We call $P_{\Psi}$ its probability distribution on the path space $ \mathcal{V}$.\\
Note that for any set $B \in \mathcal{B}(\mathcal{V})$, one has
\begin{equation} \label{almostsure}
P_{\Psi}(B) =1 \;\; \text{if and only if} \; P_y(B))=1 \;\; \text{for almost all} \; y > 0
\end{equation}
because $P_{\Psi} [ B ] $ is equal to $\; \int_{R^+} P_y [ B ] \psi(y) dy\;$ 
with $\psi(y) $ continuous and positive for $y>0$.\\
\begin{theorem} \label{geometric}
Consider the continuous time squared volatility process $Y_t$ driven by SDE \eqref{vol}. For any fixed $T$, the discrete Markov chain $V_n =Y_{nT}$ is then \textit{geometrically ergodic} in the sense of Ibragimov and Linnik (see definition in \cite{haggstrom2005central} \cite{ibragimov90independent}). More precisely, denote by $\Psi$ the unique stationary probability of $Y_t$ , by $\mathcal{B}(R^+) $ the family of all Borel subsets of $R^+$, and let $\omega = e^{- T \kappa} < 1$. Then for each starting point $y >0$, there is a constant $C(y)$ such that 
\begin{equation} \label{geomergodic} 
\sup_{ B \in \mathcal{B}(R^+)} \; | P_y ( V_n \in B ) - \Psi(B) | < C(y) \omega^n \; \; \text{for all} \; n \geq 0
\end{equation}
The Markov chain $\; \mathcal{V}_n = ( V_n,V_{n+1} ) \;$ on $R^+ \times R^+ $ is also geometrically ergodic with the same geometric rate $ \omega^n $.\\
For the two Markov chains $V_n$ and $V_n,V_{n+1}$, the pointwise ergodic theorem then holds. This means (see \cite{bradley2005basic}),that for any two Borel functions $h(v)$ on $R^+ $ and $k(v,z)$ on $(R^+)^2$ such that 
$\bar{h} = E_{\Psi}( | h(V_n) |$ and $ \bar{k} = E_{\Psi}[\; | k(V_n,V_{n+1}) | \;]$ are finite, then for any starting point $y>0$, we have the $P_y$ almost sure pointwise convergence 
\begin{eqnarray}
\lim_{N \to \infty}& \frac{1}{N} \sum_{n=0}^{N-1} h(V_n) = \bar{h} \quad P_y \; \text{a.s.} \label{h.ergodic} \\
\lim_{N \to \infty} &\frac{1}{N} \sum_{n=0}^{N-1} k(V_n, V_{n+1}) = \bar{k} \quad P_y \; \text{a.s.} \label{k.ergodic}
\end{eqnarray}
Note that the finiteness of $\bar{h}$ and $\bar{k}$ is obviously equivalent to 
\[
\bar{h} = \int_{R^+} \; | h(v) | \psi(v) dv < \infty \;\; \text{and} \;\;
\bar{k} = \int_{(R^+)^2} | k(v,z) | \psi(v) g_T(v,z) dv dz < \infty
\]
\end{theorem}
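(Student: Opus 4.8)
The plan is to deduce all three assertions from the exponential transition-density bound \eqref{g-psi} of Proposition \ref{g.psi.bound}. First I would observe that, since $Y_t$ is a time-homogeneous Markov process and $V_n = Y_{nT}$, the $n$-step transition density of the chain $V_n$ started at $V_0 = y$ is exactly $g_{nT}(y,\cdot)$, so that the relevant value of $\nu$ is $e^{-nT\kappa} = \omega^n$. Writing the total variation distance as half the $L^1$ distance of the two densities,
\[
\sup_{B \in \mathcal{B}(R^+)} |P_y(V_n \in B) - \Psi(B)| = \frac{1}{2}\int_{R^+} |g_{nT}(y,z) - \psi(z)|\, dz,
\]
I would bound the right-hand side by integrating \eqref{g-psi} in $z$. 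The term $\int_0^\infty (1+z)z^r e^{-\lambda z/4}\, dz$ is a finite constant depending only on $r$ and $\lambda$, contributing $C(y)\,\omega^n$. Integrating $(1+z)z^r$ over the set $\{z \leq 64\omega^n y\}$ produces a factor of order $(\omega^n y)^{r+1}$; since $r>0$ and $0<\omega<1$ one has $\omega^{n(r+1)} \leq 1$, and $e^{2y\lambda\omega^n} \leq e^{2y\lambda}$ uniformly in $n \geq 0$, so this second contribution is also at most $C(y)\,\omega^n$. Collecting both terms gives a constant $C(y)$, independent of $n$, for which \eqref{geomergodic} holds; this is precisely geometric ergodicity in the Ibragimov--Linnik sense.

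For the paired chain $\mathcal{V}_n = (V_n, V_{n+1})$, I would first note that it is a homogeneous Markov chain on $(R^+)^2$ whose one-step kernel sends $(v,z)$ to $(z,z')$ with density $g_T(z,z')$ and whose stationary density is $\psi(v)\,g_T(v,z)$. Started from $V_0 = y$, the law of $(V_n,V_{n+1})$ has density $g_{nT}(y,v)\,g_T(v,z)$, so its total variation distance to stationarity is
\[
\frac{1}{2}\int_{(R^+)^2} |g_{nT}(y,v) - \psi(v)|\, g_T(v,z)\, dv\, dz = \frac{1}{2}\int_{R^+} |g_{nT}(y,v) - \psi(v)|\, dv,
\]
where I used $\int_{R^+} g_T(v,z)\, dz = 1$. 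The right-hand side is exactly the bound already obtained for $V_n$, so the paired chain inherits the same geometric rate $\omega^n$.

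Finally, for the pointwise ergodic theorem, geometric ergodicity makes the strictly stationary chains $V_n$ and $\mathcal{V}_n$ mixing, hence ergodic as stationary processes, so Birkhoff's theorem yields the $P_\Psi$-almost sure convergence of the averages in \eqref{h.ergodic} and \eqref{k.ergodic} to $\bar h$ and $\bar k$ whenever these are finite, the finiteness being exactly the stated integral conditions. By \eqref{almostsure} this convergence then holds $P_y$ almost surely for Lebesgue-almost all $y$. To upgrade to every $y > 0$, I would use a one-step smoothing argument: the convergence event $B$ is unaffected by dropping the first coordinate, hence shift-invariant, so the Markov property gives $P_y(B) = \int_{R^+} g_T(y,z)\, P_z(B)\, dz$; since $g_T(y,\cdot)$ is a probability density strictly positive on $R^+$ and $P_z(B)=1$ for almost all $z$, this integral equals $1$ for every $y > 0$.

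The hard part will be the integral estimate in the first paragraph, in particular confirming that the indicator contribution together with the factor $e^{2y\lambda\omega^n}$ can be absorbed into a single $y$-dependent constant valid simultaneously for all $n \geq 0$ — the apparent growth of $e^{2y\lambda\nu}$ for small $n$ being harmless since the total variation never exceeds $1$. The transfer of the ergodic limit from Lebesgue-almost every $y$ to every $y > 0$ via shift-invariance is the other step that needs to be handled with some care.
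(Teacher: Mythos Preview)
Your proof is correct and follows essentially the same route as the paper: both integrate the pointwise bound \eqref{g-psi} at $t=nT$ to control the total variation, both reduce the paired chain $\mathcal{V}_n$ to the single chain by integrating out the second coordinate through the kernel $g_T$, and both obtain the same geometric rate $\omega^n$. The only noteworthy difference is in the final step: the paper simply invokes a black-box result of Ibragimov--Linnik that geometric ergodicity implies the pointwise ergodic theorem for every starting point, whereas you spell this out via Birkhoff under $P_\Psi$, the equivalence \eqref{almostsure}, and a one-step smoothing through the strictly positive density $g_T(y,\cdot)$ to pass from almost every $y$ to every $y>0$; your argument is more self-contained, the paper's is shorter.
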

\begin{proof}
Let $H(z)$ be any Borel function of $z>0$ such that $0 \leq H(z) \leq 1 $. For any Borel subset $B$ of $R^+$, one has 
\[
| E_y ( H(V_n ) - \int_{z>0} \; H(z) \psi(z) dz | \leq \int_{z >0} \; H(z) \; | g_{nT}(y,z) - \psi(z) | dz
\]
Applying \eqref{g-psi} with $t= nT$ , and hence $ \nu = e^{-t \kappa}= \omega^n <1$, we can bound the preceding integral by 
\[
C y \omega^n \;[\; \int_{z >0} \; (1+z) z^r e^{- \lambda z / 4} dz + e^{2 y \lambda \omega^n} \int_{z \leq y}\; (1+z) z^r dz \;] 
\]
The bracketed term in this expression is inferior to $ C (1 +y + y^{1+r}) e^{2y \lambda} $ for some constant $C$, which yields
\begin{equation} \label{11}
| E_y ( H(V_n ) - \int_{v>0} \; H(v) \psi(v) dz | \leq C \omega^n \; e^{3 y \lambda} \; \; \text{for all} \; n \geq 0
\end{equation}
for some constant $C$ which does not depend on the choice of the function $h$ verifying $0 \leq h(z) \leq 1$. \\
Let $B$ be any Borel subset of $R^+$. Selecting $H = 1_B$, inequality \eqref{11}then becomes the relation \eqref{geomergodic} and thus proves the geometric ergodicity of $V_n$. \\
Consider now the Markov chain $\; \mathcal{V}_n = ( V_n,V_{n+1} ) \;$. The stationary measure $\Phi$ of $ \mathcal{V}_n $ on $R^+ \times R^+$ has density $ \phi(v,z) = \psi(v) g_T(v,z)$. \\
Let $W$ be any Borel subsets of $R^+ \times R_+$. Define the function $H$ on $R^+$ by 
\[
H(v) = \int_{z > 0} 1_ W(v,z) g_T(v,z) dz \;\; \text{for all} \; v >0
\]
which clearly verifies $0 \leq H \leq 1$ and 
\[
\Phi(W) = \int_{R^+ \times R^+} \; 1_W(v,z) \psi(v) g_T(v,z) = 
\int_{v>0} \; H(v) \psi(v) dv
\]
We also then have $\; P_y \left[ (V_n,V_{n+1}) \in W \; | \; V_n \right] = H(V_n) \;$, which in turn implies $\; 
P_y ( \mathcal{V}_n \in W ) = E_y( H(V_n) ) \;$. Combining these relations with \eqref{11} shows that 
\begin{equation} \label{12}
| P_y [\; \mathcal{V}_n \in W \;] - \Phi( W ) | \leq C(y) \omega^n \; \; \text{for all} \; n \geq 0
\end{equation}
for all Borel subsets $ W$ of $R^+ \times R^+$. It follows that the chain $\mathcal{V}_n$ is geometrically ergodic. 
By a generic result of Ibragimov-Linnik (see \cite{ibragimov90independent}) the pointwise almost sure ergodic convergence theorem holds for any geometrically ergodic Markov chain, which means that the announced statements \eqref{h.ergodic} and \eqref{k.ergodic} are valid. This concludes the proof of the theorem.
\end{proof}
\subsection{Absolute and conditional moments of the volatility} \label{allmoments}
%
Our asymptotic study below for the statistics $a_N, b_N, c_N, d_N, f_N$ will require the following uniform moments estimates. 
\begin{lemma} \label{momentlemma}
Fix any $T$ and any pair of exponents $q > 1$ and $1 < p< r+1 = 2 \zeta$. Fix any polynomial $pol(x,z)$ of degree 2 and set $K_t = pol(Y_t, Y_{t + T}) / Y_t $. Then for any starting point $y>0$, and for any $u < \lambda / 4$ the expectations $E_y (Y_t^q)$ , $E_y (e^{u Y_t})$ , and $ E_y ( | K_t | ^p ) $ remain uniformly bounded for all $ t\geq 0$. \\
In the stationary case where the distribution of $Y_0$ is the invariant probability $\Psi$, the 
variables $Y_t^q$ and $ | K_t | ^p $ also have finite expectations under $P_{\Psi}$, which are obviously constant in $t$. 
\end{lemma}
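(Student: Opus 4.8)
The plan is to reduce the lemma to three elementary integral estimates against the transition density $g_t(y,z)$, all obtained from the pointwise upper bound \eqref{cox++}, while exploiting throughout that $\nu = e^{-t\kappa}\le 1$ for $t\ge 0$ so that every bound is uniform in $t$. For the positive moments $E_y(Y_t^q)=\int_0^\infty z^q g_t(y,z)\,dz$, inequality \eqref{cox++} gives $E_y(Y_t^q)\le C\int_0^\infty z^{q+r}e^{-\lambda z/4}\,dz + Ce^{\frac32\lambda y\nu}\int_0^{64y\nu}z^{q+r}\,dz$; the first integral is a finite $\Gamma$-integral independent of $t$, and the second is bounded by $Ce^{\frac32\lambda y}(64y)^{q+r+1}$ using $\nu\le 1$, hence a finite constant $C(y)$, this boundary term even vanishing as $t\to\infty$ since $\nu\to 0$. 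The exponential moment is identical: $E_y(e^{uY_t})\le C\int_0^\infty z^r e^{-(\lambda/4-u)z}\,dz + (\text{bounded boundary term})$, and the first integral converges precisely because $u<\lambda/4$. This settles the first two assertions.

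The third assertion rests on the corresponding negative-moment bound. For $0<s<r+1$, the same substitution gives $E_y(Y_t^{-s})\le C\int_0^\infty z^{r-s}e^{-\lambda z/4}\,dz + Ce^{\frac32\lambda y\nu}\int_0^{64y\nu}z^{r-s}\,dz$, and both integrals converge near $0$ exactly because $r-s>-1$, i.e. $s<r+1$; once again $\nu\le 1$ makes the bound uniform in $t$. This is the origin of the hypothesis $p<r+1=2\zeta$, and it reflects the $z^r$ behaviour of the density near the origin displayed in \eqref{psi}.

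To handle $K_t=pol(Y_t,Y_{t+T})/Y_t$ I would expand the degree-$2$ polynomial and divide by $Y_t$, producing the monomials $Y_t^{-1},\,1,\,Y_t,\,Y_{t+T},\,Y_{t+T}/Y_t,\,Y_{t+T}^2/Y_t$. Since $p>1$, Minkowski's inequality reduces the problem to bounding each monomial in $L^p(P_y)$. The terms $Y_t^{-1}$, $Y_t$, $Y_{t+T}$ are covered directly by the moment bounds above (the last one applied at time $t+T\ge 0$, still uniform in $t$). For the two mixed terms $Y_{t+T}^a/Y_t$ with $a\in\{1,2\}$ I would \emph{not} try to compute the conditional moment $E_y(Y_{t+T}^a\mid Y_t)$ from \eqref{cox++}: this is the genuine obstacle, because the factor $e^{\frac32\lambda y\nu}$ in that bound yields only exponential-in-$Y_t$ control, which cannot be integrated against $Y_t^{-p}$ given that merely exponential moments up to $\lambda/4$ are available. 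Instead I would apply Hölder's inequality, $E_y(Y_{t+T}^{ap}Y_t^{-p})\le E_y(Y_{t+T}^{ap\alpha})^{1/\alpha}\,E_y(Y_t^{-p\beta})^{1/\beta}$ with $\tfrac1\alpha+\tfrac1\beta=1$, choosing $\beta\in(1,(r+1)/p)$ so that $p\beta<r+1$; this is possible precisely because the inequality $p<r+1$ is strict. The negative factor is then controlled by the negative-moment estimate and the positive factor $E_y(Y_{t+T}^{ap\alpha})$, with finite exponent $ap\alpha$, by the positive-moment estimate at time $t+T$, both uniformly in $t$.

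Finally, in the stationary case $Y_0\sim\Psi$ one has $Y_t\sim\Psi$ and the law of $(Y_t,Y_{t+T})$ constant in $t$, so the expectations are automatically constant in $t$ and it suffices to check finiteness once. Integrating against $\psi(z)\propto z^r e^{-\lambda z/2}$ from \eqref{psi} gives $E_\Psi(Y_t^q)<\infty$ for every $q>-1-r$ and $E_\Psi(Y_t^{-s})<\infty$ for $s<r+1$, the latter again by near-zero integrability of $z^{r-s}$; the same Hölder splitting, now using finite positive and negative stationary moments, controls $E_\Psi(|K_t|^p)$. I expect the exponent bookkeeping in the Hölder step, namely keeping the negative exponent strictly below $r+1$ while retaining a finite conjugate exponent, to be the only point requiring genuine care.
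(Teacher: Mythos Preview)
Your proof is correct and follows essentially the same route as the paper: derive the positive, exponential, and negative moment bounds directly from inequality \eqref{cox++}, then control $|K_t|^p$ by a H\"older splitting that keeps the negative exponent on $Y_t$ strictly below $r+1=2\zeta$. The only cosmetic difference is that the paper applies H\"older once to the full product $|pol(Y_t,Y_{t+T})|^p\cdot Y_t^{-p}$ (choosing $s'>1$ with $ps'<2\zeta$ so that $Y_t^{-p}\in L_{s'}$ and $|pol|^p\in L_{s''}$), whereas you first expand $K_t$ into monomials via Minkowski and then H\"older-split only the mixed ones; both variants are driven by the same exponent constraint $p<2\zeta$.
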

\begin{proof}
To prove these bounds, apply inequality \eqref{cox++} to prove that $E_y (Y_t^q)$ , $E_y (e^{u Y_t})$ , and $E_y (1/Y_t^p)$ are finite and uniformly bounded for $t>0$. Pick $s' >1$ and $s">1 $ such that $p < s' p < 2 \zeta$ and $1/s' + 1/s" =1$. Then $1/Y_t^p$ is in $L_{s'}$ and the polynomial power $| pol(Y_t, Y_{t + T}) |^p$ is in $L_{s"}$. The duality between $L_{s'}$ and $L_{s"}$ then shows that $E_y (K_t^p) $ is finite and uniformly bounded for $t>0$.\\
A similar proof takes care of the strictly stationary case.
\end{proof}
The conditional mean $\; m_y = E( Y_{s+T} \; | \; Y_{s} = y ) \;$ and the conditional variance 
$\; var_y = var( Y_{s+T} \; | \; Y_s =y ) \;$ are easily derived, after rescaling by $\lambda$, from the known mean and variance of the noncentral chi-square density \eqref{noncentral}, to yield 
\begin{eqnarray*}
m_y &=& \int_{R^+} z g_T(y,z) dz = \theta (1-\omega) + \omega y \quad \quad \quad \\
var_y &=& \int_{R^+} (z - m_y)^2 g_T(y,z) dz = \gamma^2 \frac{1-\omega}{\kappa} ( \omega y + (1- \omega) \theta/2 ) \quad
\end{eqnarray*} 
We will also need the conditional means
\begin{eqnarray*}
E(Y_{s+T} - Y_s \; | \; Y_{s} = y )&=& \int_{R^+} (z-y) g_T(y,z) dz = m_y -y \quad \quad \\
E( (Y_{s+T} - Y_s)^2 \; | \; Y_{s} = y ) &=& \int_{R^+} (z - y)^2 g_T(y,z) dz = var_y + (m_y -y)^2 \quad \quad 
\end{eqnarray*}
Replacing $m_y$ and $var_y$ by their values then implies 
\begin{eqnarray} \label{EDV.EDV2}
\int_{R^+} (z-y) g_T(y,z) dz &=& (1-\omega) (\theta - y) \quad \quad \\
\int_{R^+} (z - y)^2 g_T(y,z) dz &=&
y \omega (1-\omega) \frac{\gamma^2}{\kappa} + (1- \omega)^2 ( \frac{\gamma^2 \theta}{2 \kappa } + (\theta - y)^2 ) \quad \quad 
\end{eqnarray}
\section{Asymptotic Bias for $T$ fixed and $N \to \infty $ }
\subsection{Hypotheses and Notations : } \label{notations} We observe a sub-sampled squared volatility process $Y_t$ driven by the Heston SDE \eqref{vol} parametrized by $\kappa, \theta, \gamma^2 $ verifying the constraints \eqref{ParDom}. The sub-sampling time $T>0$ is fixed and known (or pre-imposed by the user). The canonical parameters $\zeta >1/2$ and $0 < \omega <1 $\ are defined by \eqref{intrinsic}.
Given $N$ observations $ V_n = Y_{nT}$ of the squared volatility, we compute the vector $\mathcal{A}_N$ of sufficient statistics $ ( a_N, b_N, c_N, d_N, f_N) $ by \eqref{coeff1}, and the vector of discretized maximum likelihood volatility parameter estimators $ \Theta_N = ( \hat \kappa_N, \hat \theta_N, \hat \gamma^2_N )$ by formulas \eqref{hatestim}.\\
We will use freely the notations just introduced as well as those of \eqref{intermediary.notations}. \\
Recall that $\mathcal{A}_N$ is said to be \textit{ generic} when \eqref{genericity} holds, or equivalently when $ \Theta_N $ verifies the constraints \eqref{ParDom}. 
\begin{definition}{Asymptotic genericity : } We shall say that the sequence $\mathcal{A}_N$ is almost surely asymptotically generic when for each initial $y >0$, the vector $\mathcal{A}_N$ becomes $P_y$-almost surely generic as $N \to \infty$
\end{definition}
\begin{theorem} \label{thmbias}
For fixed sub-sampling time $T$, under the preceding hypotheses and notations, consider $N$ squared volatility data $Y_{nT}$, and the associated vector of 5 sufficient statistics $\mathcal{A}_N$.\\
Then the sequence $\mathcal{A}_N$ is almost surely asymptotically generic if and only if the canonical parameters $\zeta >1/2$ and $\omega= e^{- \kappa T}$ verify one of the two following sets of constraints : 
\begin{eqnarray}
\text{Case (i) } &:& \zeta \geq \frac{3}{4} \; \text{ and $T$ is arbitrary} \label{i}\\
\text{Case (ii) } &:& \frac{1}{2} < \zeta < \frac{3}{4} \; \text{ and } \;\; \omega > \; \frac {\zeta \; (3 - 4 \zeta)} {1 - \zeta} \quad \label{ii}
\end{eqnarray}
Note that in Case (ii) the lower bound on $\omega$ is equivalent to the positive upper bound 
$\; \kappa T < \log \; \frac{1 - \zeta}{\zeta \; (3 - 4 \zeta)}$ , and hence certainly holds for $T$ small enough.\\
Moreover, in both cases (i) and (ii), for every initial $y >0$ and as $N \to \infty$ the volatility parameter estimators $( \hat \kappa_N, \hat \theta_N, \hat \gamma^2_N )$ converge $P_y \; a.s.$ to deterministic limits given by 
\begin{equation} \label{limits}
\kappa_{\infty} \; = \; \frac{1- \omega}{T}; \quad \theta_{\infty} \; = \; \theta; \quad 
\gamma^2_{\infty} = \frac{(1- \omega) \gamma^2}{\kappa T} \; [ \; \omega + (1-\omega) \; \frac{\zeta}{2 \zeta - 1} \; ].
\end{equation}
Hence for fixed $T > 0$ and $N \to \infty$, the estimator $\hat \theta_N$ is asymptotically unbiased, while the estimators $\hat \kappa_N$ and $\hat \gamma_N^2$ both have non zero asymptotic biases. But when $T \to 0$, the asymptotic biases of $\hat \kappa_N$ and $\hat \gamma_N^2$ both tend to zero at the following rates 
\begin{equation*} 
\kappa_{\infty}/ \kappa -1 \; \simeq \; \kappa T/2 \quad \quad \quad 
\gamma^2_{\infty} / \gamma^2 -1 \; \simeq \; \frac{\kappa T}{2} \; \frac{3 - 4 \zeta}{2 \zeta - 1} \quad 
\end{equation*}
\end{theorem}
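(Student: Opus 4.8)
The plan is to reduce everything to the pointwise ergodic theorem (Theorem \ref{geometric}) applied to the five sufficient statistics, and then read off both the almost-sure limits of the estimators and the genericity dichotomy. First I would compute the $P_y$-almost-sure limits of $a_N, b_N, c_N, d_N, f_N$. Each of these is of the form $\frac{1}{N}\sum_n h(V_n)$ or $\frac{1}{N}\sum_n k(V_n,V_{n+1})$, so by \eqref{h.ergodic} and \eqref{k.ergodic} they converge to the stationary expectations $\bar h = \int h(v)\psi(v)\,dv$ and $\bar k = \int k(v,z)\psi(v)g_T(v,z)\,dv\,dz$, provided these integrals are finite. Finiteness is exactly what Lemma \ref{momentlemma} is designed to guarantee: the delicate terms are $a_N$ and $d_N$, which involve $1/V_n$ and $(\Delta V_n)^2/V_n$, and the lemma ensures these have finite stationary expectation precisely because $p<r+1=2\zeta$ is allowed when $\zeta>1/2$. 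The integrals for $b_N$ and $c_N$ reduce to the conditional-mean formula \eqref{EDV.EDV2}, and the integral for $a_N$ reduces to the conditional second-moment formula \eqref{EDV.EDV2}; the limit of $d_N$ is $2\,E_\Psi(1/V)$ and of $f_N$ is $2\,E_\Psi(V)=2\theta$. These stationary moments are computable in closed form from the chi-square density \eqref{psi}, using $E_\Psi(1/V)=\lambda/(2r)=\lambda/(2(2\zeta-1))$ and $E_\Psi(V)=\theta$.

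Next I would denote by $a_\infty,\dots,f_\infty$ these deterministic limits and substitute them into the rational formulas \eqref{hatestim} for $\hat\kappa_N,\hat\theta_N,\hat\gamma_N^2$. Since \eqref{hatestim} expresses the estimators as continuous rational functions of $\mathcal{A}_N$ on the region where $d_Nf_N-4\neq 0$, the continuous mapping theorem gives $P_y$-a.s.\ convergence to the values obtained by plugging in $a_\infty,\dots,f_\infty$, as long as $d_\infty f_\infty - 4 \neq 0$. The main computational task here is to verify algebraically that these substitutions collapse to the clean expressions \eqref{limits}: that $\kappa_\infty=(1-\omega)/T$, that $\theta_\infty=\theta$ exactly (the asymptotic unbiasedness), and that $\gamma_\infty^2$ equals the stated combination. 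I expect the $\theta_\infty=\theta$ identity to follow cleanly because $b_\infty$ and $c_\infty$ both carry a factor $(1-\omega)(\theta-\,\cdot\,)$ from \eqref{EDV.EDV2}, so the ratio in $\hat\theta_N$ simplifies; the $\gamma^2$ computation is the one where the $\zeta/(2\zeta-1)$ term must emerge, coming from the $E_\Psi(1/V)$ contribution through $d_\infty$.

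The genericity dichotomy is then handled by translating the constraints \eqref{generic2}, namely $\hat\kappa_N>0$ and $0<\hat\gamma_N^2<2\hat\kappa_N\hat\theta_N$, into conditions on the limits. Because the estimators converge a.s.\ to the interior-or-boundary point $(\kappa_\infty,\theta_\infty,\gamma_\infty^2)$, asymptotic genericity holds iff this limit lies strictly inside the admissible domain, i.e.\ iff the Feller-type inequality $\gamma_\infty^2<2\kappa_\infty\theta_\infty$ holds strictly (the positivity of $\kappa_\infty$ and $\theta_\infty$ being automatic). Substituting the formulas \eqref{limits}, the inequality $\gamma_\infty^2<2\kappa_\infty\theta_\infty$ becomes, after cancelling the common positive factor $(1-\omega)/T$, an inequality of the form $\omega+(1-\omega)\frac{\zeta}{2\zeta-1}<2\zeta$, which I would reduce to a linear inequality in $\omega$ whose solution is exactly the dichotomy: it holds for all $\omega\in(0,1)$ when $\zeta\geq 3/4$ (Case (i)), and imposes the lower bound $\omega>\zeta(3-4\zeta)/(1-\zeta)$ when $1/2<\zeta<3/4$ (Case (ii)). The only subtlety is the strict-versus-nonstrict boundary and the equivalence between the event-level statement (asymptotic genericity, meaning $\mathcal{A}_N$ eventually satisfies the open conditions) and the closed-form inequality on the limit; I would handle this by noting that when the limit lies strictly interior the open conditions \eqref{generic2} hold for all large $N$ a.s.\ by convergence, and conversely when the limit lies on or outside the boundary the conditions fail infinitely often. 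Finally the small-$T$ expansions are a direct Taylor computation: writing $\omega=e^{-\kappa T}=1-\kappa T+\tfrac12(\kappa T)^2+O(T^3)$ and substituting into \eqref{limits} gives $\kappa_\infty/\kappa-1\simeq \kappa T/2$ and the stated rate for $\gamma^2_\infty/\gamma^2-1$ after keeping first-order terms.

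\textbf{Main obstacle.} The step I expect to be most delicate is establishing finiteness of the stationary expectation of $a_N$ and $d_N$, i.e.\ integrability of $1/V$ and $(\Delta V)^2/V$ against the stationary and transition densities near $V=0$. This is exactly where the constraint $\zeta>1/2$ (equivalently $r>0$, equivalently the Feller condition $2\kappa\theta>\gamma^2$) is essential, and where Lemma \ref{momentlemma} with the exponent range $1<p<2\zeta$ must be invoked carefully; without it the ergodic averages could diverge and the whole reduction would break down.
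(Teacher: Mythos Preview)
Your proposal is correct and follows essentially the same route as the paper: apply the pointwise ergodic theorem (Theorem \ref{geometric}) to the five sufficient statistics, compute their stationary limits via \eqref{EDV.EDV2} and the explicit density \eqref{psi}, push these limits through the rational map \eqref{hatestim}, and then analyze the inequality $\gamma_\infty^2 < 2\kappa_\infty\theta_\infty$ to obtain the dichotomy (i)/(ii). One small clarification: the reason $\theta_\infty=\theta$ drops out so cleanly is simply that $c_\infty=0$ exactly (stationarity makes $E_\Psi[V_{n+1}-V_n]=0$), so $\hat\theta_\infty = b_\infty f_\infty/(2b_\infty) = f_\infty/2 = \theta$.
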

\begin{corollary} \label{corbias}
Hypotheses and notations are those of theorem \ref{thmbias}. Given $N$ sub-sampled price and squared volatility observations $X_{nT}, Y_{nT} $, define as in section \ref{driftcorr.estim} the estimators $\hat\mu_N$ and $\hat{\rho}_N$ of the drift $\mu$ and the correlation $\rho$. Assume that one of the two asymptotic genericity conditions \eqref{i} or \eqref{ii} is satisfied, Then for almost all initial $y >0$, when $N \to \infty$, the estimators $\hat\mu_N$ and $\hat{\rho}_N $ converge $P_y \; a. s.$ to deterministic limits $\mu_{\infty}$ and $\rho_{\infty}$. Moreover when $T \to 0$, the asymptotic biases $\; ( \mu_{\infty} - \mu ) \;$ and $\; ( \rho_{\infty} - \mu ) \;$ both tend to zero.
\end{corollary}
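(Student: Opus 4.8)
The plan is to handle $\hat\mu_N$ and $\hat\rho_N$ separately, in each case writing the relevant empirical average over $n$ as a main term controlled by the volatility ergodic theorem plus a martingale remainder that vanishes, and computing the limiting constants through $\mathcal F_{nT}$-conditional expectations.

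For the drift I rewrite \eqref{hatmu} as $\hat\mu_N = A_N/(T\,B_N)$ with $B_N=\frac1N\sum_n 1/V_n$ and $A_N=\frac1N\sum_n \frac{1}{V_n}\frac{\Delta U_n}{U_n}$. By the pointwise ergodic theorem \eqref{h.ergodic} applied to $h(v)=1/v$, which lies in $L^1(\Psi)$ precisely because $\zeta>1/2$, the denominator converges $P_y$-a.s.\ to the finite strictly positive constant $E_\Psi[1/V]$. For $A_N$ I split $\frac{\Delta U_n}{U_n}$ into its conditional mean and a martingale increment. Since the price SDE \eqref{changestock} with $\sigma=T$ carries only the linear drift $T\mu\,U_s$, one gets $E[U_{n+1}\mid\mathcal F_{nT}]=U_n e^{T\mu}$, hence $E[\frac{\Delta U_n}{U_n}\mid\mathcal F_{nT}]=e^{T\mu}-1$ independently of $V_n$. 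The conditional-mean part then equals $(e^{T\mu}-1)B_N\to(e^{T\mu}-1)E_\Psi[1/V]$ by \eqref{h.ergodic}, while the remainder $\frac1N\sum_n\frac{1}{V_n}\big(\frac{\Delta U_n}{U_n}-(e^{T\mu}-1)\big)$ is $\frac1N$ times a martingale and tends to $0$ $P_y$-a.s.\ by the strong law for martingales, the required moment control coming from Lemma \ref{momentlemma} (for $1/V_n$) combined via H\"older with the price-return moments. Dividing gives the deterministic limit $\mu_\infty=(e^{T\mu}-1)/T$, and expanding yields $\mu_\infty-\mu=(e^{T\mu}-1-T\mu)/T\simeq \mu^2 T/2\to 0$ as $T\to 0$.

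For the correlation I first invoke Theorem \ref{thmbias} and the drift computation to replace the estimators $\hat\mu_N,\hat u_N,\hat v_N,\hat w_N$ by their a.s.\ deterministic limits; by continuity this lets me replace $\Delta\hat Z_n$ and $\Delta\hat B_n$, up to negligible terms, by fixed functions of $(V_n,V_{n+1})$ and of the return $\frac{\Delta U_n}{U_n}$. Since $\hat\rho_N$ is a fixed continuous function of the five empirical moments $\frac1N\sum\Delta\hat Z_n$, $\frac1N\sum\Delta\hat B_n$, $\frac1N\sum(\Delta\hat Z_n)^2$, $\frac1N\sum(\Delta\hat B_n)^2$ and $\frac1N\sum\Delta\hat Z_n\Delta\hat B_n$, it suffices to find the a.s.\ limit of each. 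The two moments built only from the volatility increments converge by the bivariate ergodic theorem \eqref{k.ergodic}, their limits being integrals of the explicit conditional quantities \eqref{EDV.EDV2}; the moments involving $\frac{\Delta U_n}{U_n}$ are treated by the same conditional-mean-plus-martingale decomposition as for the drift.

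The crux is the cross term $\frac1N\sum_n\Delta\hat Z_n\Delta\hat B_n$, whose limit requires the $\mathcal F_{nT}$-conditional covariance between the price return and the volatility increment, and this is exactly where the instantaneous correlation $\rho$ enters. Writing $\Delta V_n$ and $\log(U_{n+1}/U_n)$ through the stochastic integrals $\sqrt{2w}\int_n^{n+1}\sqrt{V_s}\,dB_s$ and $\sqrt T\int_n^{n+1}\sqrt{V_s}\,dZ_s$, and using that the time change of section \ref{time} preserves the correlation $d\langle Z,B\rangle_s=\rho\,ds$, the It\^o isometry for the correlated pair turns this covariance into $\rho$ times an integral of conditional volatility moments; assembling all five limits produces $\rho_\infty$ as an explicit deterministic function of the parameters and $T$. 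For the behaviour as $T\to 0$ I expand every conditional moment to leading order: the Euler scheme becomes exact, the conditional variances of $\Delta\hat Z_n$ and $\Delta\hat B_n$ tend to $1$ while their conditional covariance tends to $\rho$, so $\rho_\infty-\rho\to 0$. I expect the main obstacles to be the uniform integrability needed to kill the martingale remainders at fixed $T$ (in particular controlling the second moments of the price returns, which in Heston can be delicate for large $T$) and the careful bookkeeping of the $O(T)$ discretization corrections in the $T\to 0$ expansion of the cross term; the passage from the stationary law $P_\Psi$ to $P_y$ for almost all $y$, which accounts for the ``almost all $y$'' in the statement, is then immediate from \eqref{almostsure}.
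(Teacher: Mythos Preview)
The paper does not actually prove this corollary: it states only that ``the proofs of the corollary and of the theorem are quite similar'' and then proceeds to prove Theorem~\ref{thmbias}. The intended argument is therefore an extension of the ergodic approach used there, namely to enlarge the Markov chain $(V_n,V_{n+1})$ to one that also carries the price return $\Delta U_n/U_n$ (equivalently, the path segment of $(Y_s,Z_s)$ over $[nT,(n+1)T]$), invoke geometric ergodicity of this enlarged chain, and read off the deterministic limits of the five empirical moments defining $\hat\rho_N$ and of the ratio defining $\hat\mu_N$.

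Your proposal is a correct outline but follows a genuinely different route. Instead of proving ergodicity of an enlarged chain, you keep the ergodic theorem confined to the volatility chain (exactly as in Theorem~\ref{thmbias}) and eliminate the price-dependent pieces by conditioning on $\mathcal F_{nT}$ and invoking the martingale strong law for the fluctuation terms; the cross moment is then identified through the It\^o isometry for the correlated pair $(Z,B)$. This buys you explicit formulas ($\mu_\infty=(e^{T\mu}-1)/T$, and a closed expression for $\rho_\infty$) without having to set up a new chain. The price you pay, which you correctly flag, is the need for second-moment bounds on the Heston returns to run the martingale SLLN; these are not automatic for all $T$ in Heston, whereas the paper's ergodic route would only require $L_1$-integrability under the stationary law. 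In short: your argument is more constructive and yields the limits directly, while the paper's implicit argument is softer on moments but less explicit.
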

\begin{proof} 
The proofs of the corollary and of the theorem are quite similar. So we only present the proof of the theorem \ref{thmbias}. \\
For fixed $T$, we have seen above that for the Markov chains $V_n = Y_{nT}$ and $(V_n, V_{n+1})$, the pointwise almost sure convergence ergodic theorems hold $P_y \;$ a.s. for all $y>0$.\\
For $v>0, z>0$, define five functions $\; k_a(v,z) , k_b(v,z) , k_c(v,z) , h_d(z) , h_f(z) \;$ by 
\begin{eqnarray*}
k_a = \frac{(z-v)^2}{v} \; ; \; k_b = -2\frac{(z-v)}{v} \; ; \; k_c = 2 (z-v) \; ; \; h_d = \frac{2}{z} \; ; \; h_f = 2z
\end{eqnarray*}
The five sufficient statistics $a_N, b_N, c_N, d_N, f_N$ are then given by the sums
\begin{eqnarray}
a_N &=& \frac{1}{N} \sum_{n=0}^{N-1} \; k_a(V_n , V_{n+1}); \quad 
b_N = \frac{1}{N} \sum_{n=0}^{N-1} \; k_b(V_n , V_{n+1}); \nonumber \\
c_N &=& \frac{1}{N} \sum_{n=0}^{N-1} \; k_c(V_n , V_{n+1}); \quad 
d_N = \frac{1}{N} \sum_{n=0}^{N-1} \; h_d(V_n ); \quad
f_N = \frac{1}{N} \sum_{n=0}^{N-1} \; h_f(V_n ) \nonumber \\
\end{eqnarray} 
The three functions $k_a, k_b, k_c$ are in $L_1(\Psi)$ and the two functions $h_d, h_f $ are in $L_1(\Phi)$, where $\Psi$ and $\Phi$ are resp. the stationary measures of the Markov chains $V_n$ and $(V_n, V_{n+1})$. Indeed, the explicit form \eqref{psi} of the stationary density $\psi(z)$ yields 
\begin{equation} \label{EzE1/z}
\int_{R^+} h_d(z) \psi(z) dz = 4 \kappa / (2 \kappa \theta - \gamma^2) \quad \text{and} \;\; \int_{R^+} h_f(z) \psi(z) dz = 2 \theta 
\end{equation}
and the explicit integrals of $k_a , k_b, k_c$ with respect to $\phi(v,z) =\psi(v)g_T(v,z) $ are obtained by combining \eqref{EDV.EDV2} and \eqref{EzE1/z} to yield 
\begin{eqnarray*} 
\int_{(R^+)^2} k_a(v,z) \psi(v)g_T(v,z) dv dz &=& (1-\omega) \; \frac{\gamma^2}{\kappa} + (1-\omega)^2 \; \frac{ \gamma^4 }{ \kappa (2\kappa \theta - \gamma^2) }\\
\int_{(R^+)^2} k_b(v,z) \psi(v)g_T(v,z) dv dz &=& -2 (1-\omega)\frac {\gamma^2} { 2 \kappa \theta - \gamma^2 }\\
\int_{(R^+)^2} k_c(v,z) \psi(v)g_T(v,z) dv dz &=& 0
\end{eqnarray*}
So we can now apply the pointwise ergodic convergence theorem to the five statistics $a_N, b_N, c_N, d_N, f_N$ to conclude that for each starting point $y>0$, we have the $P_y $ almost sure limits
\begin{eqnarray*} 
\lim_{N \to \infty} a_N = a_{\infty} &=& (1-\omega) \; \frac{\gamma^2}{\kappa} + (1-\omega)^2 \; \frac{ \gamma^4 }{ \kappa (2\kappa \theta - \gamma^2) }\\
\lim_{N \to \infty} b_N = b_{\infty} &=& -2 (1-\omega)\frac {\gamma^2} { 2 \kappa \theta - \gamma^2 }\\
\end{eqnarray*}
\begin{eqnarray*}
\lim_{N \to \infty} c_N = c_{\infty} = 0 \;; \quad \lim_{N \to \infty} d_N = d_{\infty} = 4 \frac{\kappa}{2 \kappa \theta - \gamma^2} \;; \quad 
\lim_{N \to \infty} f_N = f_{\infty} = 2 \theta
\end{eqnarray*} 
Thus for fixed $T$ and almost all $y> 0 $, we have $ P_y \; a.s.$ the deterministic limit
$$ \lim_{N \to \infty } \mathcal{A}_N = \mathcal{A}_{\infty} = ( \; a_{\infty}, b_{\infty}, c_{\infty}, d_{\infty}, f_{\infty} \; )
$$
For $T$ fixed and $N \to \infty \;$ the vector $\hat \Theta_N$ is of the form 
$\hat \Theta_N = G( \mathcal{A}_N ) $ where $G : R^5 \rightarrow R^3$ is a fixed rational function given by the three explicit formulas \eqref{hatestim}. Hence $\hat \Theta_N$ will also converge $P_y \; a.s. $ to the explicit deterministic limit $\Theta_{\infty} = G( \mathcal{A}_{\infty}) $. Replacing the coordinates of $ \mathcal{A}_{\infty} $ by the explicit values obtained above yields the deterministic expressions of $\kappa_{\infty}, \theta_{\infty}, \gamma^2_{\infty} $ announced in equation \eqref{limits}, and these three limits are positive since $(1-\omega)>0 $ and $2 \kappa \theta - \gamma^2 > 0$. \\
As seen above, for each $N$, the vector of sufficient statistics $\mathcal{A}_N \;$ is generic iff the random variables $\hat \theta_N \;, \hat \gamma_N^2 \;,$ and $\; (2 \hat \kappa_N \hat \theta_N - \hat \gamma_N^2 ) \;$ are all positive. To prove that genericity become $P_y \; a.s.$ valid as $N \to \infty$, one then needs only to verify 
\begin{equation*} 
\theta_{\infty} > 0, \quad \gamma^2_{\infty} > 0, \quad 2 \kappa_{\infty} \; \theta_{\infty} > \gamma^2_{\infty} 
\end{equation*}
The first two inequalities have been proved above; the third one becomes, due to \eqref{limits}, 
\begin{equation} \label{ineq2}
2 \kappa \theta > \gamma^2 \; [ \; \omega + (1-\omega) \; \frac{ \kappa \theta}{2 \kappa \theta - \gamma^2} \; ] 
\end{equation}
which is equivalent to 
\begin{equation} \label{ineq3}
(2\zeta - 1)^2 > (1- \omega) \; (1 - \zeta) \quad \text{where } \;\; 
\zeta = \kappa \theta / \gamma^2 \; > \; 1/2 \quad 
\end{equation}
When $1 \leq \zeta $, inequality \eqref{ineq3} and hence \eqref{ineq2} are true for all $T>0$, so we now only need to study the case $1/2 < \zeta < 1$. \\
Then \eqref{ineq3} holds iff $\omega $ verifies
\begin{equation} \label{ineq5}
\omega > \frac{\zeta \; (3 - 4 \zeta)}{1 - \zeta}
\end{equation} 
When $3/4 \leq \zeta < 1 $, inequality \eqref{ineq5} and hence \eqref{ineq2} will hold for all $\omega \in [ 0 , 1 ] $ and thus for all $T>0$.\\
We have thus proved that for all $T>0$, asymptotic genericity holds $P_y \;$ almost surely when $3/4 < \zeta $.\\
Finally when $1/2 < \zeta < 3/4 $, the constraint \eqref{ineq5} on $\omega = e^{-\kappa T}$ is equivalent to 
\begin{equation*} 
T < \frac{1}{\kappa} \log \; \frac{1 - \zeta}{\zeta \; (3 - 4 \zeta)} )
\end{equation*}
This relation implies the validity of \eqref{generic2}, and hence the $P_y \; a.s.$ asymptotic genericity of $\mathcal{A}_N$ as $N \to \infty$. This concludes the proof of the theorem.
\end{proof}
\subsection{Asymptotically Consistent Estimators }
Since as just proved in theorem \ref{thmbias}, the estimators $ \hat \kappa_N$ and $ \hat \gamma^2_N $ are asymptotically biased, we now define two new asymptotically unbiased estimators $ \mathcal{K}_N $ and $\mathcal{G}_N$ of $ \kappa$ and $\gamma^2$, as explicit non linear functions of $( \hat \kappa_N, \hat \theta_N, \hat \gamma^2_N )$. \\

\begin{theorem} \label{thmnewestim}
For fixed time $T$, consider $N$ squared volatility data $Y_{nT}$, generated by sub-sampling the $Y_t$ volatility process. Hypotheses and notations are those of the preceding section \ref{notations}. Fix any initial volatility $Y_0 = y >0$. Assume that the canonical parameters $\zeta = \kappa \theta / \gamma^2$ and $\omega= e^{- \kappa T}$ verify either one of the two following inequalities
\begin{eqnarray}
\text{Case (i) } &:& \zeta \geq \frac{3}{4} \; \\
\text{Case (ii) } &:& \frac{1}{2} < \zeta < \frac{3}{4} \; \text{ and } \;\; \omega > \; \frac {\zeta \; (3 - 4 \zeta)} {1 - \zeta} 
\end{eqnarray} 
As seen above this guarantees "asymptotic genericity" and the computability of the parameter estimators $( \hat \kappa_N, \hat \theta_N, \hat \gamma^2_N )$ by the generic formulas \eqref{hatestim}. Moreover, due to theorem \ref{thmbias}, the relation $ T \hat \kappa_N < 1 $ becomes $P_y$- a.s. true as $N \to \infty$, and we can define a new estimator $\mathcal{K}_N $ of $ \kappa$ by 
\begin{equation} \label{500K}
\mathcal{K}_N = - \frac{1}{T}\log \; (1 - T \hat \kappa_N)
\end{equation}
Consider the quadratic polynomial $pol_N (Z)$ defined by 
\begin{equation} \label{500pol}
pol_N (Z) = (1-T \hat \kappa_N) Z^2 
+ [ \; \hat \theta_N (T \hat \kappa_N - 2) - \frac{\hat \gamma^2_N}{\hat \kappa_N}] Z 
+ 2 \frac{\hat \gamma^2_N \hat \theta_N }{\hat \kappa_N} 
\end{equation}
Then $P_y$- a.s. , as $N \to \infty$, the two roots $Z_1(N), Z_2(N)$ of $pol_N(Z)= 0 $ become real and verify $0 < Z_1(N) < 2 \hat \theta_N < Z_2(N) $ . \\
We can thus define a new estimator $\mathcal{G}_N $ of $\gamma^2$ by 
\begin{equation} \label{500G}
\mathcal{G}_N = Z_1(N) \mathcal{K}_N
\end{equation}
Then as $N \to \infty$, the three estimators $\mathcal{K}_N, \hat \theta_N, \mathcal{G}_N $ converge $P_y$- a.s. to the true parameter values $ \kappa, \theta, \gamma^2$. In particular as $N \to \infty $, the estimator 
\begin{equation} \label{500Z}
\hat \zeta_N = \mathcal{K}_N \hat \theta_N / \mathcal{G}_N
\end{equation}
converges $P_y$- a.s. to the canonical parameter $\zeta$
\end{theorem}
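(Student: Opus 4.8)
The plan is to reduce everything to the almost sure limits already established in Theorem \ref{thmbias} together with continuity arguments, since $\mathcal{K}_N$, $\mathcal{G}_N$ and $\hat\zeta_N$ are explicit continuous functions of the triple $(\hat\kappa_N, \hat\theta_N, \hat\gamma^2_N)$ wherever the relevant denominators and discriminants stay bounded away from their degenerate values. First I would recall from \eqref{limits} the $P_y$-a.s.\ limits $\hat\kappa_N \to (1-\omega)/T$, $\hat\theta_N \to \theta$, and $\hat\gamma^2_N \to \gamma^2_\infty$. Since $T\hat\kappa_N \to 1-\omega < 1$, the quantity $1 - T\hat\kappa_N$ stays a.s.\ positive and bounded away from $0$ for large $N$, which both justifies the definition \eqref{500K} and yields, by continuity of the logarithm, $\mathcal{K}_N \to -\frac{1}{T}\log\omega = \kappa$. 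The consistency of $\hat\theta_N$ is immediate, being exactly the identity $\theta_\infty = \theta$ from \eqref{limits}.

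The substantive step concerns $\mathcal{G}_N$. I would pass to the limit in the coefficients of $pol_N(Z)$ in \eqref{500pol}: using $\frac{\hat\gamma^2_N}{\hat\kappa_N} \to \frac{\gamma^2}{\kappa}[\,\omega + (1-\omega)\frac{\zeta}{2\zeta-1}\,]$, the polynomial $pol_N$ converges coefficientwise and $P_y$-a.s.\ to a fixed quadratic $pol_\infty(Z)$ whose leading coefficient $\omega$ is nonzero. The key algebraic claim, which I would verify by direct substitution, is that $Z = \gamma^2/\kappa$ is a root of $pol_\infty$; writing $g = \gamma^2/\kappa$, $\theta = \zeta g$, and $M = \omega + (1-\omega)\frac{\zeta}{2\zeta-1}$, this reduces to the identity $\omega - \zeta(1+\omega) + M(2\zeta-1) = 0$, which holds after expanding $M(2\zeta-1) = \omega(2\zeta-1) + (1-\omega)\zeta$.

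Next I would pin down the ordering of the two roots of $pol_\infty$. The product of its roots equals $2\zeta g^2 M/\omega$, so the second root is $2\zeta g M/\omega$. Since $\zeta > 1/2$ gives $g < 2\zeta g = 2\theta$, and since $M > \omega$ (because $(1-\omega)\frac{\zeta}{2\zeta-1} > 0$) gives $2\zeta g M/\omega > 2\zeta g = 2\theta$, the root $g$ is the strictly smaller one and lies in $(0,2\theta)$, while the other strictly exceeds $2\theta$. In particular the two limiting roots are distinct, so the discriminant of $pol_\infty$ is strictly positive; by continuity of the roots of a quadratic in its coefficients, for large $N$ the roots $Z_1(N) < Z_2(N)$ of $pol_N$ are real, satisfy $0 < Z_1(N) < 2\hat\theta_N < Z_2(N)$, and $Z_1(N) \to g = \gamma^2/\kappa$ $P_y$-a.s. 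Consequently $\mathcal{G}_N = Z_1(N)\mathcal{K}_N \to (\gamma^2/\kappa)\,\kappa = \gamma^2$, and finally $\hat\zeta_N = \mathcal{K}_N\hat\theta_N/\mathcal{G}_N \to \kappa\theta/\gamma^2 = \zeta$.

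I expect the only delicate point to be the passage from ``the limiting quadratic has its smaller root equal to $\gamma^2/\kappa$, strictly separated from $2\theta$'' to ``the finite-$N$ smaller root $Z_1(N)$ converges to it.'' This is precisely where the strict separation $0 < g < 2\theta < Z_2(\infty)$, equivalently the strict positivity of the limiting discriminant, is needed, so that the root labels are stable under small perturbations of the coefficients; without that separation one could not rule out the two finite-$N$ roots coalescing or swapping. Everything else is either a direct quotation of Theorem \ref{thmbias} or the routine algebra sketched above.
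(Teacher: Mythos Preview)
Your proposal is correct and follows essentially the same route as the paper: reduce to the a.s.\ limits of Theorem \ref{thmbias}, verify that $g=\gamma^2/\kappa$ is a root of the limiting quadratic, establish the strict separation $0<g<2\theta<Z_2(\infty)$, and conclude by continuity of the roots. The only cosmetic difference is that the paper checks the separation by observing that $pol_N(2\hat\theta_N)$ becomes negative (hence $2\hat\theta_N$ lies strictly between the two real roots), whereas you compute the second limiting root explicitly as $2\zeta g M/\omega$ via the product of the roots and compare it to $2\theta$ directly; both arguments are equivalent and equally short.
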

\begin{proof}
By equation \eqref{limits}, we have $P_y $ \; a.s.
$$ 
\lim_{N \to \infty} \hat \kappa_N \; = \; \frac{1- e^{- \kappa T}}{T }
$$
Solving this relation for $\kappa$ immediately yields $P_y $ \; a.s.
$$ 
\kappa = \lim_{N \to \infty} - \frac{1}{T}\log \; (1 - T \hat \kappa_N) = 
\lim_{N \to \infty} \mathcal{K}_N
$$
Again by equations \eqref{limits}, we have $P_y $ \; a.s.
\begin{equation} \label{501}
\lim_{N \to \infty} \hat \gamma^2_N \; = \; \frac{(1- \omega) \gamma^2}{\kappa T} \; [ \; \omega + (1-\omega) \; \frac{\zeta}{2 \zeta - 1} \; ]
\end{equation}
Introduce the new unknown $z = \gamma^2 / \kappa$. By \eqref{limits} we also have $P_y $ \; a.s. 
\begin{eqnarray*}
\frac{(1- \omega) \gamma^2}{\kappa T} = z \lim_{N \to \infty} \hat \kappa_N, \quad
\omega = \lim_{N \to \infty} ( 1 - T \hat \kappa_N ), \quad
\zeta = \frac {\kappa \theta}{\gamma^2}= \frac{1}{z}\lim_{N \to \infty} \hat \theta_N
\end{eqnarray*}
and $\dfrac{\zeta}{2 \zeta - 1} = \dfrac{1}{2 - z / \hat \theta_N }$.
Substituting these relations into equation \eqref{501}, we see that the expression 
$$
F_N(z) = - \hat \gamma^2_N + 
z \hat \kappa_N \;[ \; (1 - T \hat \kappa_N) + T \hat \kappa_N \frac{1}{2- z / \hat \theta_N} \;] 
$$
converges $P_y $ \; a.s. to 0 as $ N \to \infty $. This is clearly equivalent to stating that the quadratic polynomial $ pol_N(Z) $ defined by equation \eqref{500pol}must verify 
\begin{equation} \label{502}
\lim_{N \to \infty} pol_N(z) = 0 \; ; \; \; P_y a.s.
\end{equation} \
As $N \to \infty $, the coefficients of $pol_N(Z)$ have obvious limits explicitly deduced from equations \eqref{limits}. These limits immediately show that for $N \to \infty$, the value of $pol_N(2 \hat \theta_N)$ becomes negative and the discriminant of $pol_N(Z)$ becomes positive. Hence $P_y$ a.s., for N large enough, the two roots $Z_1(N), Z_2(N)$ of $pol_N(Z)$ are real and verify 
$$ 0 < Z_1(N) < 2 \hat \theta_N < Z_2(N) $$
In view of \eqref{502} it is then easy to conclude that 
\begin{equation} \label{503}
\lim_{N \to \infty} Z_1(N) = z = \gamma^2/ \kappa \; ; \; \; P_y \; a.s.
\end{equation} \
This ends the proof of theorem \ref{thmnewestim}
\end{proof}
\section{Asymptotic distributions of estimation errors}
\subsection{Strong mixing for the volatility process}
For the sufficient statistics $a_N, b_N, d_N, f_N$ computed from subsampled volatility data $V_n = Y_{nT}$ , almost sure convergence as $N\to \infty$ was derived above by applying ergodic theorems to the four sequences 
\[
\; (V_{n+1}- V_n)^2/V_n \;; \quad (V_{n+1}- V_n)/V_n \;; \quad 1/V_n \;; \quad V_n 
\]
To study the asymptotic distributions of these four statistics we need to establish \textit{strong mixing} properties for specific functions of the sub-sampled volatility process $V_n$.
\begin{proposition}\label{strongmix}
Hypotheses and notations are those of Th. \ref{thmbias}. Fix $T$ and consider the sub-sampled squared volatility process $ V_n= Y_{nT} $ with intrinsic parameters $\zeta >1/2$ and $0 < \omega <1 $. 
Fix any starting point $y>0$ and any exponent $q>0$. Under the probability $P_y$ the variables $H_n = V_n^q$ remain uniformly bounded in $L_2$, and for some constant $C$, their covariances verify, 
\begin{equation} \label{decayH} 
| cov(H_{n+j}, H_n) | < C \omega^j \quad \text{for all integers} \;\; n \geq 0 , \; j \geq 0
\end{equation}
Let $K_n$ be random variables of the form $K_n= k(V_n,V_{n+1})$ where $k(y,z)= pol(y,z)/y$ and $pol (y,z)$ is an arbitrary polynomial of degree 2 in $ (y,z) $. \
Under the probability $P_y$, and provided $\zeta>1$, the variables $K_n$ are uniformly bounded in $L_2$, and for some constant $C $ their covariances verify
\begin{equation} \label{decayK} 
| cov(K_{n+j}, K_n) | < C \omega^j \quad \text{for all integers} \;\; n \geq 0 , \; j \geq 0
\end{equation}
In the strictly stationary case where $V_0$ has the unique invariant distribution $\Psi$ with density $\psi$ (see \eqref{psi}), the two results \eqref{decayH}and \eqref{decayK} remain valid under the probability $P_{\Psi}$. 
\end{proposition}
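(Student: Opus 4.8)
The plan is to establish the two covariance-decay bounds \eqref{decayH} and \eqref{decayK} by exploiting the geometric ergodicity of the chains $V_n$ and $(V_n,V_{n+1})$ already established in Theorem \ref{geometric}, combined with the uniform moment bounds of Lemma \ref{momentlemma}. The uniform $L_2$-boundedness claims are the easy starting point: for $H_n = V_n^q$ we have $E_y(H_n^2) = E_y(Y_{nT}^{2q})$, which is uniformly bounded in $n$ by Lemma \ref{momentlemma} applied with exponent $2q > 1$; for $K_n = pol(V_n,V_{n+1})/V_n$ the uniform bound $E_y(K_n^2) = E_y(|K_n|^2) \leq C$ follows from the same lemma applied with $p = 2$, which is legitimate precisely because the hypothesis $\zeta > 1$ gives $2 < r+1 = 2\zeta$, so that $p=2$ lies in the admissible range $1 < p < 2\zeta$. (This is exactly why the $\zeta>1$ restriction is needed for $K_n$ but not for $H_n$.)

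For the covariance decay itself, I would first reduce to the centered case. Write $\bar H = E_\Psi(H_n)$ and estimate $\mathrm{cov}(H_{n+j},H_n) = E_y(H_{n+j}H_n) - E_y(H_{n+j})E_y(H_n)$. The natural route is to condition on $V_n = v$ and use the Markov property: $E_y(H_{n+j} \mid V_n = v) = \int_{z>0} z^q \, g_{jT}(v,z)\,dz$, the $j$-step conditional mean started from $v$. The key analytic input is the exponential convergence of the transition density supplied by Proposition \ref{g.psi.bound}, namely the bound \eqref{g-psi} on $|g_{jT}(v,z) - \psi(z)|$ with $\nu = \omega^j$. Substituting this bound and integrating against $z^q$, the stationary part integrates to $\bar H$ while the remainder is controlled by $C\,\omega^j$ times an integral of the form $\int (1+z)z^{q+r}[e^{-\lambda z/4} + \mathbf 1_{\{z \le 64 v\}} e^{2v\lambda}]dz$, which is a finite function of $v$ with at most polynomial-times-exponential growth. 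One then takes expectations over $v = V_n$ under $P_y$, using the uniform moment and exponential-moment bounds of Lemma \ref{momentlemma} (this is where $E_y(e^{uY_{nT}})$ with $u < \lambda/4$ is used to absorb the $e^{2v\lambda}$ factor on the small-$z$ event) to conclude that the full covariance is bounded by $C\,\omega^j$ uniformly in $n$. The bound for $K_n$ is handled by the same scheme applied to the two-step chain $\mathcal V_n = (V_n,V_{n+1})$, whose geometric ergodicity with the identical rate $\omega^j$ was also established in Theorem \ref{geometric}, via the bound \eqref{12}.

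The main obstacle I anticipate is the careful bookkeeping of the small-$z$ indicator term $\mathbf 1_{\{z \le 64 v\}}\,e^{2v\lambda}$ appearing in \eqref{g-psi}: this factor grows exponentially in the conditioning value $v$, so merely having polynomial moments of $V_n$ is insufficient, and one must verify that the exponent $2\lambda$ stays strictly below the threshold $\lambda/4$ guaranteeing finiteness of $E_y(e^{uY_{nT}})$ in Lemma \ref{momentlemma} — which it does \emph{not} at face value, so the correct argument must pair this term with the Gaussian-type decay in $z$ coming from $e^{-\lambda z/4}$ before integrating, or split the $z$-integral so that on the event $\{z \le 64v\}$ the factor $\omega^j$ already supplies the decay while the remaining $v$-dependent integral is tamed by Hölder against a slightly sub-threshold exponential moment. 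Getting the constants to line up so that a single exponential-moment bound from Lemma \ref{momentlemma} covers both the $H_n$ and the $K_n$ cases, uniformly in $n$ and in $j$, is the delicate point; the stationary case under $P_\Psi$ then follows immediately since $\Psi$ is an average of the $P_y$ bounds over $y$ weighted by the continuous positive density $\psi$, exactly as in \eqref{almostsure}.
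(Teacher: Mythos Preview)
Your overall strategy---condition, apply the density bound \eqref{g-psi}, then integrate using the moment estimates of Lemma \ref{momentlemma}---is exactly the route taken in the paper. But the ``obstacle'' you flag in your last paragraph is an artifact of a misreading of \eqref{g-psi}. The indicator there is $\mathbf{1}_{\{z \le 64\,\nu\,y\}}$ and the exponential factor is $e^{2y\lambda\nu}$, both carrying the extra factor $\nu = e^{-t\kappa} = \omega^j$ that you dropped. This is precisely what rescues the argument: for $j$ large enough that $2\lambda\,\omega^j < \lambda/4$ (equivalently $\omega^j < 1/8$), the exponent lies safely in the range where Lemma \ref{momentlemma} controls $E_y(e^{uV_n})$, and the finitely many small $j$ are handled by Cauchy--Schwarz since both factors are uniformly in $L_2$. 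So no delicate splitting or H\"older trick is needed; the $\nu$ in the exponent does the work for you.

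The paper's proof follows the same scheme with two cosmetic differences worth noting. First, it rescales at the outset to set $\lambda = 1$, which streamlines the constants. Second, for the $K_n$ case it conditions on $\mathcal{F}_{n+1}$ (since $K_n$ is $\mathcal{F}_{n+1}$-measurable) rather than treating the pair chain directly: one writes $h(y) = E(K_n \mid V_n = y)$, then $H_t(y) = \int h(z)\,[g_t(y,z) - \psi(z)]\,dz$ with $t = (j-1)T$, obtains $|H_t(y)| \le C\,y\omega^{j-1}\,e^{3y\omega^{j-1}}$, and finishes by bounding $\|H_t(V_{n+1})\|_{L_2}$ against the stationary density $\psi(y) \sim y^r e^{-y/2}$, which is finite once $6\omega^{j-1} < 1/2$. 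Your two-step-chain formulation is an equivalent packaging of the same computation.
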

\begin{proof}
The validity of results \eqref{decayH}and \eqref{decayK} is obviously invariant when we replace $Y_t$ by $AY_t$. By equation \eqref{barchange} this rescaling leaves $\zeta$ and $\omega$ unchanged but replaces $\lambda = \frac{4\kappa}{\gamma^2 (1 - \omega)}$ by $\lambda/ A$. So before proving the proposition we implement this rescaling with $A= \lambda$, and still keep the notation $V_n= Y_{nT}$ for the rescaled process. From now on, we thus have $\lambda=1$.\\
We now give a detailed proof only for the more delicate case of the variables $K_n$ under the strictly stationary probability $P_{\Psi}$, and under the ( unavoidable ) hypothesis $\zeta >1$. All the other strong mixing statements in prop. \ref{strongmix} can be derived by quite similar proofs, omitted here for brevity.\\
Due to lemma \ref{momentlemma}, $K_n = pol(V_n, V_{n+1}) / V_n$ must be in $L_2$ since $2 \zeta > 2$. Define the conditional expectation 
\[
h(y) = E(K_n | Vn=y) = \int_{z>0} \; dz \; g_T(y,z) k(y,z) 
\] 
Since $k(y,z) =pol(y,z)/y$, where $pol$ is a polynomial of degree 2, equation \eqref{EDV.EDV2} shows directly that $| h(y) | < C (1 + y + 1/y) $ for some constant C 
so that $\; M = E(K_n) = \int_{y >0} \; dy \; \psi(y) h(y) \;$
Let $\mathcal{F}_n$ be the sigma-algebra generated by $V_0, \ldots, V_n$. \\
For $j >1$ the relation $\; E ( K_{n+j} \; | \; \mathcal{F}_{n+j} ) = h(V_{n+j}) \;$ entails 
\[
E ( K_{n+j} \; | \; \mathcal{F}_{n+1} ) = E ( h(V_{n+j}\; | \; \mathcal{F}_{n+1} ) = G_t(V_{n+1})
\]
where $t= (j-1)T$ and $\; G(y) = \int_{z >0} \; dz h(z) g_{(j-1)T}(y,z) $. The function 
\[
H_t(y) = G_t(y) - M = \int_{z >0} \; dz h(z) [\; g_t(y,z) - \psi(z) \;]
\]
verifies then $\; E ( K_{n+j} - M \; | \; \mathcal{F}_{n+1} ) = H_t(V_{n+1} ) \;$, which yields 
\begin{equation} \label{18}
cov(K_{n+j}, K_n)= E [\; (K_n - M)(K_{n+j}- M) \;] \; = E ( K_n H_t(V_{n+1}) ) 
\end{equation}
In view of lemma \ref{momentlemma} and of the bound $h(z) | < 1+z +1/z $, the definition of $H_t$ implies, with the notations $\nu = y e^{- t \kappa}$ and $t= (j-1) T$, 
\begin{equation} \label{19}
| H_t(y) | \leq C \nu \; [ \; \int_{z >64 \nu} \; (1 +z +1/z) (1+z) z^r e^{-z/4}+ \int_{z \leq 64 \nu} (1 +z +1/z) (1+z) z^r e^{2 \nu} \;] 
\end{equation} 
Since $\; r-1= 2 \zeta -2 >0 \;$ , the first integral in \eqref{19} is bounded by a constant. The 2nd integral is bounded by $\;C(\nu^{r-1} + \nu^r +\nu^{r+2}e^{2 \nu} ) \;$ and a fortiori by $C e^{3 \nu}$. Hence \eqref{19}finally implies 
\begin{equation} \label{20}
| H_t(y) | \leq C \nu e^{3 \nu} = C y e^{- t \kappa} \exp( 3 y e^{- t \kappa} ) 
\end{equation}
Under the probability $P_{\Psi}$, one has $\; E( H_t(V_{n+1})^2 ) = \int_{y>0} \; \psi(y) H_t(y) ^2 dy \;$. This implies in view of \eqref{20} and \eqref{psi},

\[
E( H_t(V_{n+1})^2 ) \leq C e^{- 2 t \kappa} \int_{y>0} y^{r+2}exp(-y/2)\exp( 6y e^{- t \kappa} )
\]
This last integral remains clearly bounded by a finite constant for all $t = (j-1)T$ such that $ e^{- t \kappa} < 1/24 $. Hence for all $j > 1 + \frac{1}{T} \log(24)$, the norm $ || H_t(V_{n+1}) ||_{L_2}$ is inferior to $C e^{- t \kappa} = C \omega^{j-1} $. Since $|| K_n ||_{L_2}$ is a finite constant, the expressions of covariances obtained in \eqref{18} entail the exponential decay inequality 
\[
| | cov(K_{n+j}, K_n)| | \leq C \omega^{j}\;\; \text{for all positive integers $n$ and $j$}
\]
\end{proof}
\subsection{Dichotomy between Gaussian and Stable asymptotics } Due to prop.\ref{invariance} for the Heston volatility SDE \eqref{vol}, the distribution of 
$D_N = \; [\; \hat\kappa_N/\kappa , \hat\theta_N/\hat\theta , \hat\gamma_N^2/\hat \gamma^2 \;] \;$ depends only on $N$ and on the canonical parameters $\; \zeta > 1/2 \;$ and $0 < \omega <1 \;$. By Th.\ref{thmbias}, asymptotic genericity is always true when $\; \frac{3}{4} \leq \zeta \;$, but when $\; \frac{1}{2} < \zeta < \frac{3}{4} \;$, asymptotic genericity holds iff $T$ is small enough. In these two situations, we have also seen that $D_N$ converges almost surely to a deterministic limit $D_{\infty}$. We now seek to adequately rescale the distribution of $D_N - D_{\infty}$ to ensure convergence in distribution as $N \to \infty$.\\
As seen in section \ref{ergodicth} the vector of four statistics $a_N, b_N, d_N, f_N $ is of the form $\frac{1}{N} \sum_{n=0}^{N-1} k_n$ with $k_n = k(V_n, V_{n+1})$, where the vector valued function $k(y,z)$ is $\; (z-y)^2/y , \; (z-y)/y ,\; 1/y , \; 2 y \;$.
Due to the strong mixing properties proved in the last paragraph, we may expect that as $N \to \infty$ the vector of statistics $a_N, b_N, d_N, f_N $ should behave roughly like the average of i.i.d. random vectors having the same finite moments as the $k_n$. By lemma \ref{momentlemma}, we know that the corresponding $k_n$ have finite moments of order $p>1 $ if and only if $1< p < 2 \zeta$.\\
We shall also prove below that for any $s<1$, the statistic $c_N$ verifies almost surely $ \lim_{N \to \infty} \; N^s c_N = 0$. \\
These remarks lead us to expect a radical dichotomy between the two cases $\zeta >1$ and $1/2 < \zeta \leq 1$. For $\zeta >1$, we will prove below that $N^{1/2} (D_N -D_{\infty})$ is asymptotically Gaussian. But for $1/2 < \zeta \leq 1$ we conjecture below that there is an explicit exponent $q = q(\zeta) )<1/2 $ such that the rescaled variables $N^{q} (D_N -D_{\infty})$ converge in distribution to limit distributions with heavy tails similar to the tails of stable distributions.

\subsection{The asymptotically Gaussian case : $\zeta >1$ }
\begin{theorem} \label{gaussian.asymptotics} Hypotheses and notations are those of Th. \ref{thmbias}. Assume that the canonical parameter verifies $\zeta >1 $. Let $A_N = ( a_N, b_N, c_N, d_N, f_N ) $ be the vector of sufficient statistics given by \eqref{coeff1}, let $\Theta_N = (\hat \kappa_N, \hat \theta_N, \hat \gamma^2_N)$ be the vector of volatility parameters estimators computed from $A_N$ by formulas \eqref{hatestim}, and let $U_N = (\mathcal{K}_N, \hat \theta_N, \mathcal{G}_N)$ be the vector of asymptotically unbiased estimators constructed from $\Theta_N$ in theorem \ref{thmnewestim}. Fix a starting point $Y_0 = y > 0$. Then as $N \to \infty$, the three vectors $\; A_N , \Theta_N , U_N \;$ converge $P_y$-a.s. to deterministic limits $A_{\infty}$, $\Theta_{\infty}$, $U_{\infty}$, computed in Th. \ref{thmbias} and Th. \ref{thmnewestim}. Recall that $U_{\infty} $ is the true parameter vector $ ( \; \kappa , \theta , \gamma^2 \; )$.\\
Then under the probability $P_y$, as $N \to \infty$ the random vectors $N^{1/2} (A_N - A_{\infty}) $, $N^{1/2} (\Theta_N - \Theta_{\infty}) $ and $N^{1/2} (U_N - U_{\infty}) $ are asymptotically Gaussian with zero mean. 
\end{theorem}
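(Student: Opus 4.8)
The plan is to establish asymptotic normality first for the vector of sufficient statistics $A_N$, and then transfer this Gaussian limit to $\Theta_N$ and $U_N$ by the delta method, exploiting the fact that these are smooth functions of $A_N$ in a neighborhood of the almost sure limit. The four non-trivial statistics $a_N, b_N, d_N, f_N$ are all of the form $\frac{1}{N}\sum_{n=0}^{N-1} k(V_n, V_{n+1})$ where $k(y,z) = pol(y,z)/y$ for a polynomial of degree $2$, so I would apply a central limit theorem for strongly mixing stationary (and asymptotically stationary) sequences. The key input is Prop. \ref{strongmix}, which gives that under the hypothesis $\zeta > 1$ these variables are in $L_2$ with covariances decaying geometrically, $|cov(K_{n+j}, K_n)| < C\omega^j$. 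Geometric decay of covariances, combined with $L_2$ (indeed slightly better) integrability, is exactly the strong mixing condition needed to invoke a CLT of Ibragimov--Linnik type for geometrically ergodic Markov chains. I would therefore first show that $N^{1/2}(a_N - a_\infty, b_N - b_\infty, d_N - d_\infty, f_N - f_\infty)$ converges to a centered multivariate Gaussian, with covariance matrix given by the usual long-run (summed) covariances $\sum_{j \in \mathbb{Z}} cov(k(V_0,V_1), k(V_j, V_{j+1}))$, a series which converges absolutely thanks to the geometric mixing bound.

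The remaining component $c_N = \frac{2}{N}(V_N - V_0)$ is a telescoping sum and behaves differently: since $V_N$ has bounded moments uniformly in $N$ (Lemma \ref{momentlemma}), $N^{1/2} c_N = \frac{2}{\sqrt N}(V_N - V_0) \to 0$ in probability. As noted in the text, $N^s c_N \to 0$ almost surely for any $s < 1$, so in particular $c_N$ contributes nothing to the $N^{1/2}$-scaled Gaussian limit and may be treated as an asymptotically negligible coordinate. I would make this precise by a Slutsky argument, showing that the full vector $N^{1/2}(A_N - A_\infty)$ has the same Gaussian limit as its four-coordinate sub-vector, with the $c$-coordinate degenerate at $0$.

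Next I would pass from $A_N$ to $\Theta_N = G(A_N)$, where $G: \mathbb{R}^5 \to \mathbb{R}^3$ is the rational map defined by formulas \eqref{hatestim}. By Th. \ref{thmbias} the denominators $T(d_N f_N - 4)$ and $2b_N + c_N d_N$ converge $P_y$-a.s. to nonzero limits under the genericity guaranteed by $\zeta > 1$, so $G$ is continuously differentiable in a neighborhood of $A_\infty$. The multivariate delta method then yields that $N^{1/2}(\Theta_N - \Theta_\infty)$ is asymptotically centered Gaussian, with covariance $\nabla G(A_\infty)\, \Sigma\, \nabla G(A_\infty)^{\mathsf{T}}$, where $\Sigma$ is the limiting covariance of the statistics. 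Finally, $U_N = (\mathcal{K}_N, \hat\theta_N, \mathcal{G}_N)$ is obtained from $\Theta_N$ through the smooth transformations \eqref{500K}, \eqref{500pol}, \eqref{500G}; since $T\hat\kappa_N \to 1 - \omega < 1$ so that $\log(1 - T\hat\kappa_N)$ is smooth at the limit, and since $Z_1(N)$ is a root of $pol_N$ that is simple at the limit (the discriminant is asymptotically positive, with the two roots separated by $2\hat\theta_N$ as shown in Th. \ref{thmnewestim}), the implicit function theorem gives local smoothness of the root as a function of the coefficients. A further application of the delta method then transfers asymptotic normality to $N^{1/2}(U_N - U_\infty)$.

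The main obstacle is the first step: rigorously justifying the multivariate CLT for the mixing sequence $k(V_n, V_{n+1})$. Prop. \ref{strongmix} supplies covariance decay, but a covariance bound alone does not immediately give a CLT — one needs a genuine strong mixing (e.g. $\alpha$-mixing) coefficient bound together with a moment condition strictly beyond $L_2$ to apply the standard mixing CLT. The geometric ergodicity established in Th. \ref{geometric} does provide exponentially decaying $\alpha$-mixing coefficients for the chain $(V_n, V_{n+1})$, and Lemma \ref{momentlemma} gives finite moments of order $p$ for all $1 < p < 2\zeta$; since $\zeta > 1$ we have $2\zeta > 2$, so a moment of order $2 + \delta$ is available for some $\delta > 0$. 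The delicate point is to verify that the Ibragimov--Linnik moment-and-mixing hypotheses (e.g. $\sum_j \alpha(j)^{\delta/(2+\delta)} < \infty$) are met with the available exponents, and to confirm that the limiting covariance matrix is nondegenerate (or at least to identify its structure honestly, allowing degeneracy in the $c$-direction). Once the CLT for $A_N$ is secured, the delta-method steps are routine smoothness verifications.
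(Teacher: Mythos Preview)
Your proposal is correct and follows essentially the same route as the paper: geometric ergodicity of $(V_n,V_{n+1})$ from Th.~\ref{geometric} together with the $L_p$ bound ($p<2\zeta$, hence some $p>2$) from Lemma~\ref{momentlemma} feeds the Ibragimov--Linnik CLT for $(a_N,b_N,d_N,f_N)$, the coordinate $c_N$ is handled separately via $N^{1/2}c_N\to 0$, and the delta method transfers normality to $\Theta_N$ and then $U_N$. Your discussion is in fact more careful than the paper's on two points---you explicitly verify the mixing-plus-moment hypothesis $\sum_j\alpha(j)^{\delta/(2+\delta)}<\infty$ and you justify smoothness of the root $Z_1(N)$ via the implicit function theorem---whereas the paper simply invokes Ibragimov--Linnik and ``smooth functions preserve asymptotic normality'' without spelling these out; neither you nor the paper addresses nondegeneracy of the limiting covariance, which is fine since the statement only claims a centered Gaussian limit.
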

\begin{proof}
For fixed $T$, by th. \ref{g.psi.bound}, the Markov chains $V_n= Y_{nT}$ and $(V_n,V_{n+1})$ are both geometrically ergodic. Consider arbitrary Borel functions $ h(y)$ and $k(y,z)$ of $(y, z) \in R^2$ , and taking their values in any euclidean vector space. Assume that the coordinates of the random vectors $h_n= h(V_n)$ and $k_n= k(V_n,V_{n+1})$ belong to $L_p$ for some $p >2$. Define the random vectors $ H_N$ and $K_N $ given by 
\[ 
H_N = \frac{1}{N}\sum_{n \geq 0} \; h_n \quad \text{and} \quad K_N = \frac{1}{N}\sum_{n \geq 0} \; k_n
\] 
For functions of geometrically ergodic Markov chains, classical results of Ibragimov- Linnik (see \cite{haggstrom2005central}, \cite{ibragimov90independent}) show that $ H_N$ and $K_N $ converge almost surely to deterministic limits $H_{\infty}$ and $K_{\infty}$, and that $N^{1/2} (H_N - H_{\infty})$ as well as $N^{1/2} (K_N - K_{\infty})$ converge in distribution to centered Gaussian distributions.\\
Ibragimov-Linnik's results were initially stated for real valued functions $h(y)$ and $k(y,z)$, but they are quite easily extended to the situation where $h(y)$ and $k(y,z)$ are vector valued.\\
Let us apply this result to the function 
$$
k(y,z) = [ (z-y)^2/y , (z-y)/y , 1/y , 2 y ]
$$
and the Markov chain $(V_n,V_{n+1})$. Due to formulas \eqref{coeff1}, the average $K_N $ of the N vectors $ k_n= k(V_n,V_{n+1}) $ for $n=1$ to $N$ is then the vector of 4 statistics 
$( a_N, b_N, d_N, f_N )$. Lemma \ref{momentlemma} shows that the four coordinates of $k_n$\ are in $L_p$ for all $p < 2\zeta$ and hence for some $p >2$ since $ \zeta >1 $. \\
Applying the just quoted Ibragimov-Linnik results to $K_N = ( a_N, b_N, d_N, f_N)$ and its limit $K_{\infty} = ( a_{\infty}, b_{\infty}, d_{\infty}, f_{\infty} )$, we conclude that under $P_y$, as $N \to \infty$,the random vectors $N^{1/2} (K_N - K_{\infty}) $ converge in distribution to a four-dimensional centered Gaussian.\\
We now study the last sufficient statistic $c_N = \frac{2}{N} (V_N - V_0) $. Fix $u$ such that $ 0 < u < \lambda/4$. By lemma \ref{momentlemma}, for each fixed $y >0$, one can find a constant $C(y)$ such that 
\[
E_y ( e^{u V_n } ) = \int_{z>0} \; g_{nT} (y,z) e^{u z } dz \leq C(y) \;\; \text{for all} \; n \geq 0 
\] 
This implies $P_y (V_n > 2 \log ( n ) / u \leq C(y) / n^2$. Hence by Borel-Cantelli lemma, there is an almost surely finite random integer $N_0$ such that $V_n < 2 \log(n) / u$ for all $n>N_0$. It follows that for any $s <1$ the sequence $N^s c_N = N^s (V_N - V_0) / N)$ converges $P_y$ a.s. to $0$ as $N \to \infty$.\\
Combining the fast convergence to $0$ of $N^{1/2} c_N$ with the asymptotic normality of $K_N$, we conclude that the random vectors $A_N= ( a_N, b_N, c_N, d_N, f_N ) $ are asymptotivally Gaussian, and that the 5-dimensional random vectors $N^{1/2} (A_N - A_{\infty}) $ converge in distribution to a centered Gaussian distribution concentrated on a 
4-dimensional subspace of $R^5$. \\
By construction , $\Theta_N = (\hat \kappa_N, \hat \theta_N, \hat \gamma^2_N)$ is an explicit smooth function of $A_N$, and for $N$ large enough $U_N$ is an explicit smooth function of $\Theta_N$. Since as is well known, smooth functions preserve asymptotic normality, it follows that $N^{1/2} (\Theta_N - \Theta_{\infty}) $ and $N^{1/2} (U_N - U_{\infty}) $ both converge in distribution to centered Gaussian distributions.
\end{proof}
\subsection{The asymptotically stable case : $1/2 <\zeta \leq 1$ }
\textbf{Conjecture :} \;\; Under the hypotheses of Th. \ref{thmbias}, assume now that the canonical parameter $\zeta$ verifies $1/2 < \zeta < 1$. Then we conjecture that there is an explicit positive exponent $q = q(\zeta) < 1/2$ such that, as $N \to \infty$, the random vectors 
$ N^q (A_N - A_{\infty}) $ and $ N^q (\Theta_N - \Theta_{\infty}) $ and 
$ N^q (U_N - U_{\infty}) $ converge in distribution to limit laws having the same heavy tails as the classical "stable distributions".\\
We have obtained preliminary validation of this conjecture by intensive simulations conducted as follows, but these simulation results will be presented elsewhere. 
\subsection{Computation of Empirical Accuracies} \label{computation}
Let $DIS_N$ be the probability distribution of relative accuracies for the volatility parameter estimators $ \hat{\kappa}_N, \hat{\theta}_N, \hat{\gamma}_N^2 $. For fixed $T,N$, and known parameters $\kappa,\theta,\gamma^2$, a natural goal is to construct good numerical approximations of the distribution $DIS_N$.\\
In view of prop. \ref{invariance} one can proceed by intensive simulations of the process $J_t$ driven by the canonical SDE \eqref{canonical} parametrized by $\; \tilde{\kappa} = 1 , \tilde{\theta} = \zeta ,\tilde{\gamma} =1 \;$. By Euler discretization of SDE \eqref{canonical} with small time step $\delta$, we simulate $5,000$ long trajectories of $J_t$. Each trajectory is sub-sampled at time intervals $ \tilde{T} = \kappa T >> \delta $ to generate $N$ virtual observations $J_{n \kappa T}$, and thus yields one virtual value for the estimators $\; \tilde{\kappa}_N , \tilde{\theta}_N ,\tilde{\gamma}_N^2 \;$ of the canonical parameters $(1, \zeta, 1) $. This generates $5000$ values for the canonical relative accuracies, which by prop. \ref{invariance} provide an approximate empirical histogram of $DIS_N$. \\
We present below numerical results derived from such empirical histograms.\\ 
To simulate the canonical SDE \eqref{canonical}, fix a global observation time $S = N \kappa T$ and a small time step $\delta << \kappa T$, pick any starting point $y_0 >0$, and implement the recursive Euler discretization 
\[
y_{k+1} - y_k = \delta (\zeta - y_k) + \sqrt{y_k} \sqrt{\delta} G_k 
\]
where the $G_k$ are independent standard Gaussian random variables. This is done as long as $y_n >0$ and $n \leq S / \delta$. If $y_n$ becomes negative for some $n< S/\delta$ the trajectory is dismissed.\\
In our numerical explorations, $S / \delta$ ranged from 2,500 to 500,000, and we selected 
$\delta = \kappa T / m $ for integers m between 10 and 20.\\
To validate theoretically this simulation scheme, note that for $S$ fixed, the $L_2$ norms of $(Y_{n \delta} - y_n )$ tend to 0 as $\delta \to 0$, uniformly for all $n \delta \leq S$ (see \cite{higham2005convergence}). For alternate simulation methods, see for instance \cite{broadie2006exact, andersen2007efficient}.\\
For the estimators $ \hat{\mu}_N, \hat{\rho}_N$ of the price SDE parameters $\mu,\rho$, the estimators accuracies are numerically evaluated by a similar Euler discretization and simulation of the joint Heston SDEs.
\section{Fitting Heston SDEs to market data}
\subsection{Fitting Heston SDEs to S\&P 500 daily data}
We start with $ N= 252$ daily closing values $SPX_n, VIX_n$ of the S\&P 500 and VIX indices, recorded from day 01/03/2006 to 12/29/2006. The annualized squared volatility of SPX is as usual approximated by $VIX^2$. The CBOE database (\cite{CBOE},\cite{carr2006tale} ) computes VIX daily by 
$ VIX_n^2 = A \; \sigma_n $ where $A= 365$ and $\sigma_n$ evaluates the variance of $\; (SPX_n - SPX_{n-1} ) / SPX_{n-1}) \;$ by filtering over 30 days. \\
To model these $N$ data by sub-sampled joint SDEs, we fix the sub-sampling time at the commonly used value $T=1/252$, so that the global observation time is $S= NT = 1$. We seek a Heston process $(X_t,Y_t)$ such that $\; SPX_n = X_{nT} \; $ and $\; VIX_n^2 = A \; Y_{nT} = R_{nT} \;$ where $\; R_t = A \; Y_t \;$ is the annualized squared volatility of $X_t$. As seen in section \ref{scale}, we can model the SDEs driving $( X_t, R_t )$ by 
\begin{equation} \label{spvix}
dX_t /X_t = \mu dt + \frac{1}{\sqrt{A}} \sqrt{R_t} d Z_t \quad \quad \text{and} \quad \quad
dR_t = \kappa( \theta - R_t) dt + \gamma \sqrt{R_t} d B_t \quad 
\end{equation} 
For these $N$ joint data, the sufficient statistics $a_N,b_N,c_N,d_N,f_N$ given by \eqref{coeff1} do verify the genericity condition \eqref{genericity}. The parameters of \eqref{spvix} are then estimated by formula \eqref{hatestim} to yield 
\begin{equation}\label{SPparameters}
\hat \kappa_N = 16.6 ; \quad \hat \theta_N = 0.017 ; \quad \hat \gamma_N = 0.28 ;
\quad \hat \rho_N = - 0.54,\quad \hat \mu_N = 0.126
\end{equation}

The canonical parameters $\omega$ and $\zeta$ thus have the estimated values $\hat \omega_N = 0.936$ and $\hat \zeta_N = 3.599$ so that for this estimated Heston model, the parameter estimators should be asymptotically normal for $N$ large enough.\\
The negative correlation $\hat \rho_N = - 0.54$ between $Z_t$ and $B_t$ indicates a `skew' or `leverage' effect.\\
Let RMS stand for "Root Mean Squared". We compute the RMS estimation errors on $\; \kappa, \theta, \gamma, \rho\; $ as outlined in section \ref{computation}, by simulating 5000 joint SDEs trajectories of $(X_t,R_t)$ for $0 \leq t \leq 1$ and extracting $N=252$ sub-sampled points. This yields 
\[
RMS_{\kappa} = 5.7 \;;\; RMS_{\theta}= 0.002 \;;\; RMS_{\gamma} = 0.01 \;;\; RMS_{\rho} = 0.06 
\]
The \textit{relative} RMS estimation errors for $\;\kappa, \theta, \gamma, \rho\;$ are equal to $34 \%, 12\%, 4\%, 11 \% $, which are rather high due to the quite small number $N=252$ of data. The relative RMS error on the asset price drift $\mu$ is very high, and shows that for $N=252$ both the estimate of $\mu$ and of RMS{$\mu$}  cannot be used, but fortunately $\mu$ plays no part in the well known option pricing PDEs.\\
\subsection{Fitting Heston SDEs to Intra-Day Data}
We consider $N=510$ Credit Agricole stock price data $X_{nT}$ recorded at time intervals $T = 1$ minute on trading day 05/06/2010. The global observation time is $S=NT=510$.\\
The instantaneous squared volatility $Y_{nT}$ of $ X_{nT}$ is estimated (see \cite{garman1980estimation}) by the Garman-Klass formula 
$\;\;\; Y_{nT} \simeq 0.5 \; ( H_{nT} - L_{nT} )^2 - 0.386 \; (Q_{nT})^2 \;\;\;$, where $\; (H_{nT} , L_{nT} , Q_{nT}) \;$ are the highest, lowest, and last stock prices in the current one minute time slice. \\
Rescaling by the annualization factor $A= 60 \times 24 \times 365 = 525600 $ replaces $Y_{nT}$ by the annualized squared volatility $R_{nT} =A Y_{nT}$. As above, we seek to fit to these $N= 510$ annualized data $X_{nT}, R_{nT}$ a Heston process $X_t, R_t$ driven by joint SDEs of the form \eqref{spvix}.\\
These data do satisfy the genericity condition \eqref{genericity}, and formulas \eqref{coeff1} , \eqref{hatestim} directly yield the following parameter estimates for SDEs \eqref{spvix} 
\begin{equation*}
\hat \kappa_N= 0.48,\quad \hat \theta_N= 3.15, \quad 
\hat \gamma_N = 0.80, \quad \hat \rho_N= - .04 , \quad \hat \mu_N = 3\;10^{-5}
\end{equation*}
The canonical parameters $\omega$ and $\zeta$ are then estimated by $\hat \omega_N = 0.619$ and $\hat \zeta_N = 2.362$ so that for this Heston model, the parameter estimators should be asymptotically normal for $N$ large enough.\\
As above, we evaluate the associated RMS estimation errors by 5000 simulations of the Heston SDEs just fitted to these 510 annualized data. This provides the estimates 
\[
RMS_{\kappa} = 0.10 \;;\; RMS_{\theta}= 0.11 \;;\; RMS_{\gamma} = 0.14 \;;\; RMS_{\rho} = 0.04 \;;\; RMS_{\mu} = 8 \;10^{-5}
\]
The relative root mean squared errors on $\hat\kappa_N, \hat\theta_N, \hat\gamma_N$ are $21\%, 3.5\% ,18\%$. As for $\hat \rho_N$ and $\hat \mu_N$ the results indicate that one should pre-impose $\rho =0$ and $\mu =0 $ to model these data. 
\section{Numerical results on estimators accuracy }
\subsection{Simulations of four canonical Heston volatility SDEs}
By linear space and time rescaling as in section \ref{distrib.invariance} the S\&P 500 volatility SDE parametrized by \eqref{SPparameters} becomes the canonical SDE \eqref{canonical} with canonical parameters $\omega=0.936$, $\zeta= 3.599$, and subsampling time $T = 16.6 / 252 = 0.0659$.\\
In view of this concrete example, we have simulated long volatility trajectories $Y_t$ for four canonical Heston volatility SDEs corresponding to a fixed $\omega = 0.936$ and a fixed sub-sampling time $T = 0.0659$, successively combined with 4 values $ 1.1, 1.5 , 2.5 , 3.5$ for $\zeta$. Hence these four SDEs are parametrized by $\kappa =\gamma=1$ and $\theta= \zeta = 1.1 , 1.5 , 2.5 , 3.5$. \\
Note that for all these examples we have $\zeta >1$ and hence all our parameter estimators are asymptotically normal.\\
For each one of these four canonical SDEs, we have simulated 1100 trajectories, each one of which involved $200,000$ discrete steps of duration $T/20$. Each trajectory was subsampled at times $nT$ to extract 10,000 volatility data $Y_{nT}$. \\
For $N= 250, 500, 750, \ldots, 10,000$, these simulated data were then used to compute the sequences of nearly MLE estimators $\Theta_N = ( \hat\kappa_N, \hat\theta_N, \hat\gamma_N )$ and of consistent estimators $U_N = ( \mathcal{K}_N, \hat\theta_N, \mathcal{G}_N )$.
\subsection{Relative root mean squared errors of estimation }
Let $\eta$ be any one of the three parameters $\kappa, \theta,\gamma^2$ and let $\eta_N$ be any estimator of $\eta$, based on N subsampled observations $Y_0, Y_T, \ldots, Y_{(N-1)T}$.
We then characterize the relative accuracy of the estimator $\eta_N$ by the \textit{relative} root mean squared error of estimation, classically defined by
$$
\sigma(\eta_N) = || \; \eta_N - \eta \; ||_2 \, / \eta
$$
Here we naturally estimate these relative root mean squared errors $\sigma(\eta_N) $ by empirical averages over 1100 simulated trajectories, and their asymptotic behaviour as $N \to \infty$ depend only on the pair $\omega, \zeta$ of canonical parameters, due to the crucial scale invariance results of prop. \ref{invariance}.\\
For $T = 0.0659$ and $\omega =0.936 $, our simulations evaluate the accuracies of our five parameter estimators 
$ \; \hat\kappa_N , \mathcal{K}_N , \hat\theta_N , \hat\gamma_N , \mathcal{G}_N \; $ for $250 \leq N \leq 10000$ and for the four values $\zeta = 1.1 ,1.5 , 2.5 , 3.5$. A summary of our numerical results is provided below by the two Tables \ref{table.zeta: 1.5} and \ref{table.zeta: 3.5}.\\
\subsection{Rates of decrease for root mean squared errors}
For each $\zeta$, and for all $N > 1000$, the relative errors $\; \sigma( \mathcal{K}_N )$ , $\sigma(\hat\theta_N)$ , $\sigma( \mathcal{G}_N )$ of our three consistent estimators are very well approximated by $C_1 / N^{1/2}$ , $C_2 / N^{1/2}$, $C_3 / N^{1/2}$ , where the constants $C_1, C_2, C_3$ depend on $\zeta$ but remain quite moderate as shown below in Table \ref{table.sqrtN}. We note also that for each estimator and each $N > 1000$, the relative errors of estimation decrease when $\zeta > 1$ increases, and they practically stabilize as soon as $\zeta > 3$.\\
\begin{table}[h]
\centering
\begin{tabular*}{\columnwidth}{@{\extracolsep{\fill}}|c|c|c|c|c|}
\hline
$\zeta$ & 1.1 & 1.5 & 2.5 &3.5 \\
\hline
$N^{1/2} \, \sigma(\mathcal{K}_N) \; \simeq $ & 6.5 & 6 & 5.7 & 5.7 \\
\hline
$N^{1/2} \, \sigma(\theta_N) \; \simeq $ & 3.7 & 3.2 & 2.5 & 2.1 \\
\hline
$N^{1/2} \, \sigma(\mathcal{G}_N) \; \simeq $ & 1.65 & 1.55 & 1.5 & 1.5 \\
\hline
\end{tabular*}
\caption{We display the values of the constantes $C$ giving good approximations in $C / N^{1/2}$ for the relative root mean square errors of our three consistent parameters estimators $ \mathcal{K}_N , \hat\theta_N , \mathcal{G}_N $. These approximations are quite accurate for $N >1000$ volatility observations, with sub-sampling time $T= 0.0659 $. The canonical parameters take the values $\omega= 0.936$, and $\; \zeta= 1.1, 1.5, 2.5, 3.5 \;$.}
\label{table.sqrtN}
\end{table}

\subsection{Detailed analysis of parameter estimators accuracies}
Our detailed numerical results, which are summarized below in Tables \ref{table.zeta: 1.5} and \ref{table.zeta: 3.5}, yield the following qualitative conclusions.
\paragraph{Parameter $\kappa$ :} The relative errors of estimation for the consistent estimator $\mathcal{K}_N$ remain very slightly larger than for the biased estimator $\hat \kappa_N$ for $N \leq 10,000$. Indeed here the asymptotic relative bias of $\hat \kappa_N$ is quite small, of the order of 2.5\%, and the theoretical advantage of the consistent estimator $\mathcal{K}_N$ over $\hat \kappa_N$ only emerges for unrealistic numbers of observations $N > 40,000$ . However when $N$ increases, approximate normality becomes valid quite sooner for $\mathcal{K}_N$ than for $\hat \kappa_N$.
\paragraph{Parameter $\theta$ :} The asymptotically unbiased estimator $\hat \theta_N$ is quite accurate even for moderate values of N, and approximate normality becomes valid as soon as $N \geq 500$.
\paragraph{Parameter $\gamma^2$ :} The relative errors of estimation for the consistent estimator $\mathcal{G}^2_N$ are clearly smaller than for the biased estimator $\hat \gamma^2_N$ for $N \leq 10,000$. Indeed here the asymptotic relative bias of $\hat \gamma^2_N$ is rather large, of the order of 5\% to 5.5\%, and the theoretical advantage of the consistent estimator $\mathcal{G}^2_N$ over $\hat \gamma^2_N$ is manifest for all values of $N$. Moreover for $\mathcal{G}^2_N$, approximate normality becomes valid as soon as $N \geq 500$.

\newpage

\subsection{Numerical results for estimators accuracies }
The relative root mean squared errors of our five estimators are displayed in the following two tables, where the sub-sampling time T and the canonical parameter $\omega$ are kept fixed at $T= 0.0659$ and $\omega= 0.936$. These error sizes are hence given as percentages of the true parameter value, and the number of volatility observations is restricted to the five levels $N = 500, 1000, 2500, 5000, 10000$. \\

\begin{table} [h]
\centering
\begin{tabular*}{\columnwidth}{@{\extracolsep{\fill}}|c|c|c|c|c|c|}
\hline
N & 500 & 1000 & 2,500 &5,000 &10,000 \\
\hline
$\sigma(\hat \kappa_N)$& 28 \% & 18 \% & 11 \% & 8 \% & 6 \%\\
\hline
$\sigma(\mathcal{K}_N) $& 32 \% & 20 \% & 12 \% & 8 \% & 6 \% \\
\hline
$\sigma(\theta_N)$ & 15 \% & 10 \% & 6 \% & 4 \% & 3 \%\\
\hline
$\sigma(\hat \gamma^2_N)$ & 8 \% & 6 \% & 5 \% & 5 \% & 5 \% \\
\hline
$\sigma(\mathcal{G}^2_N)$ & 7 \% & 5 \% & 3 \% & 2 \% & 1 \%\\
\hline
\end{tabular*}
\caption{Relative root mean squared errors of volatility parameters estimators for $\zeta= 1.5$. When $\zeta$ decreases to $\zeta =1.1$, all these relative errors exhibit slight increases inferior to 1.5\%.} 
\label{table.zeta: 1.5}
\end{table}
\begin{table}[h]
\centering
\begin{tabular*}{\columnwidth}{@{\extracolsep{\fill}}|c|c|c|c|c|c|}
\hline
N &500&1000 &2,500&5,000&10,000\\
\hline
$\sigma(\hat \kappa_N)$& 26 \% & 18 \% & 11 \% & 8 \% & 6 \%\\
\hline
$\sigma(\mathcal{K}_N)$ & 29 \% & 20 \% & 12 \% & 8 \% & 6 \%\\
\hline
$\sigma(\theta_N)$ & 9 \% & 7 \% & 4 \% & 3 \% & 2 \%\\
\hline
$\sigma(\hat \gamma^2_N)$ & 9 \% & 7 \% & 6 \% & 6 \% & 6 \%\\
\hline
$\sigma(\mathcal{G}^2_N)$ & 7 \% & 5 \% & 3 \% & 2 \% & 2 \%\\
\hline
\end{tabular*}
\caption{ Relative root mean squared errors of volatility parameters estimators for $\zeta= 3.5$. When $\zeta$ decreases to $\zeta =2.5$, these relative errors remain practically unchanged} 
\label{table.zeta: 3.5}
\end{table}
\section{Conclusion}
We have completed a thorough analysis of approximate maximum likelihood parameter estimators for the widely used Heston model \cite{heston1993closed}, where both asset price and squared volatility are jointly driven by a pair of SDEs with non constant coefficients depending on five parameters. Volatilities are not directly observed in practice, but estimated by various well studied techniques. Nevertheless, to gain in clarity, we have focused our theoretical study of parameter estimators on the ideal case where volatilities are jointly observed with stock prices, at $N$ times $T, 2T, \ldots, NT$. The sub-sampling time step $T $ between successive observations is fixed, but can be known or unknown, and we have studied both situations.\\
We have derived explicit closed form expressions of approximate Maximum Likelihood Estimators (MLEs) $\hat \kappa_N, \hat \theta_N, \hat \gamma^2_N$ for the parameters $\kappa, \theta, \gamma^2 $ of the Heston volatility SDE. These formulas enable very fast numerical computations of all estimators. 
For $T$ fixed and $N$ tending to $\infty$, we have computed explicitly the asymptotic bias of our approximate MLEs, and explicitly identified the two key canonical parameters $0 < \omega = \exp{- \kappa T} < 1$ and $\zeta = \kappa \theta / \gamma^2 >1/2$ which control the asymptotic distributions of $\hat \kappa_N, \hat \theta_N, \hat \gamma^2_N$. We show how space and time rescaling reduce the study of these asymptotic distributions to the canonical cases where $\kappa = \gamma = 1$ and $\theta = \zeta$ , with subsampling at time intervals $\kappa T$. We have also constructed explicitly three asymptotically consistent estimators $\mathcal{K}_N, \hat \theta_N, \mathcal{G}^2_N$ of $\kappa, \theta, \gamma^2 $. We have characterized the dichotomy between the case $\zeta > 1$, where all our parameter estimators are asymptotically gaussian, and the case $\zeta < 1$ where their asymptotic distributions have heavy tails similar to those of stable distributions.\\
We have evaluated the small sample accuracy and the concrete speed of convergence of our parameter estimators by intensive simulations of fourcanonical Heston SDEs corresponding to realistic parameter sets. These parameter sets were selected after fitting Heston SDEs to two sets of market data: joint daily observations of the S\&P 500 index and its approximate volatility (the VIX index), joint minute by minute intra-day observations of the Credit Agricole stock price and its estimated Garman-Klass volatility. \\
In a companion paper \cite{azencott1option}, we present practical applications of our parameter estimators and previous results to quantify the sensitivity of estimated option prices to the unavoidable inaccuracy of the Heston SDEs fitted to the underlying asset price and squared volatility. \\
We have also currently exploring in another paper how our asymptotic results extend to situations where the true volatility data are not directly available but are estimated by classical ``realized volatilities" derived from observed stock prices.
\clearpage
\bibliography{refsYG}

\begin{thebibliography}{10}

\bibitem{ait2007maximum}
Y.~A\"it-Sahalia and R.~Kimmel.
\newblock Maximum likelihood estimation of stochastic volatility models.
\newblock {\em Journal of Financial Economics}, 83(2):413--452, 2007.

\bibitem{andersen2007efficient}
L.~Andersen.
\newblock Efficient simulation of the heston stochastic volatility model.
\newblock {\em Bank of America Working Paper}, 2007.

\bibitem{azencott1option}
R.~Azencott, Y.~Gadhyan, and R.~Glowinski.
\newblock {Option price sensitivity to errors in stochastic dynamics modeling}.
\newblock In {\em Proceedings of the fourth SIAM conference on mathematics for
  industry}, pages 150--161, 2009.

\bibitem{barndorff2002some}
O.~Barndorff-Nielsen, E.~Nicolato, and N.~Shephard.
\newblock {Some recent developments in stochastic volatility modelling}.
\newblock {\em Quantitative Finance}, 2(1):11--23, 2002.

\bibitem{bradley2005basic}
R.~C. Bradley et~al.
\newblock Basic properties of strong mixing conditions. a survey and some open
  questions.
\newblock {\em Probability surveys}, 2(107-144):37, 2005.

\bibitem{broadie2006exact}
M.~Broadie and O.~Kaya.
\newblock Exact simulation of stochastic volatility and other affine jump
  diffusion processes.
\newblock {\em Operations Research}, 54(2):217--231, 2006.

\bibitem{broto2004estimation}
C.~Broto and E.~Ruiz.
\newblock {Estimation methods for stochastic volatility models: a survey}.
\newblock {\em Journal of Economic Surveys}, 18(5):613--649, 2004.

\bibitem{carr1999option}
P.~Carr and D.~Madan.
\newblock {Option pricing and the fast Fourier transform}.
\newblock {\em Journal of Computational Finance}, 2(4):61--73, 1999.

\bibitem{carr2006tale}
P.~Carr and L.~Wu.
\newblock A tale of two indices.
\newblock {\em Journal of Derivatives}, 13(3):13, 2006.

\bibitem{CBOE}
CBOE.
\newblock http://www.cboe.com.
\newblock 'http://www.cboe.com'.

\bibitem{chernov1999estimation}
M.~Chernov and E.~Ghysels.
\newblock {Estimation of stochastic volatility models for the purpose of option
  pricing}.
\newblock In {\em Computational Finance (Proceedings of the sixth international
  conference on computational finance), Leonard N. Stern School of Business},
  1999.

\bibitem{cox1985theory}
J.~Cox, J.~Ingersoll~Jr, and S.~Ross.
\newblock {A theory of the term structure of interest rates}.
\newblock {\em Econometrica: Journal of the Econometric Society},
  53(2):385--407, 1985.

\bibitem{dacunha1986estimation}
D.~Dacunha-Castelle and D.~Florens-Zmirou.
\newblock {Estimation of the coefficients of a diffusion from discrete
  observations}.
\newblock {\em Stochastics An International Journal of Probability and
  Stochastic Processes}, 19(4):263--284, 1986.

\bibitem{dohnal1987estimating}
G.~Dohnal.
\newblock {On estimating the diffusion coefficient}.
\newblock {\em Journal of Applied Probability}, pages 105--114, 1987.

\bibitem{feller1951two}
W.~Feller.
\newblock {Two singular diffusion problems}.
\newblock {\em Annals of Mathematics}, 54(1):173--182, 1951.

\bibitem{garman1980estimation}
M.~Garman and M.~Klass.
\newblock On the estimation of security price volatility from historical data.
\newblock {\em Journal of Business}, 53(1):67--78, 1980.

\bibitem{genon1994estimation}
V.~Genon-Catalot and J.~Jacod.
\newblock {Estimation of the diffusion coefficient for diffusion processes:
  random sampling}.
\newblock {\em Scandinavian Journal of Statistics}, 21(3):193--221, 1994.

\bibitem{ghysels14renault}
E.~Ghysels, A.~Harvey, and E.~Renault.
\newblock {Stochastic volatily, in Handbook of Statistics 14, Statistical
  Methods in Finance. GS Maddala and CR Rao}, 1996.

\bibitem{haggstrom2005central}
O.~H{\"a}ggstr{\"o}m.
\newblock On the central limit theorem for geometrically ergodic markov chains.
\newblock {\em Probability theory and related fields}, 132(1):74--82, 2005.

\bibitem{heston1993closed}
S.~Heston.
\newblock {A closed-form solution for options with stochastic volatility with
  applications to bond and currency options}.
\newblock {\em Review of Financial Studies}, 6(2):327--343, 1993.

\bibitem{higham2005convergence}
D.~Higham and X.~Mao.
\newblock {Convergence of Monte Carlo simulations involving the mean-reverting
  square root process}.
\newblock {\em Journal of Computational Finance}, 8(3):35--62, 2005.

\bibitem{ibragimov90independent}
I.~Ibragimov and Y.~V. Linnik.
\newblock Independent and stationary sequences of random variables. 1971.
\newblock {\em ISBN}, 90(01):41885.

\bibitem{melino1990pricing}
A.~Melino and S.~Turnbull.
\newblock {Pricing foreign currency options with stochastic volatility}.
\newblock {\em Journal of Econometrics}, 45(1-2):239--265, 1990.

\bibitem{melino1991pricing}
A.~Melino and S.~Turnbull.
\newblock {The pricing of foreign currency options}.
\newblock {\em Canadian Journal of Economics}, 24(2):251--281, 1991.

\end{thebibliography}
\end{document}